\documentclass[a4paper,12pt]{article}
\usepackage[utf8]{inputenc}
\usepackage[T1]{fontenc}
\usepackage[english]{babel}
\usepackage{amsmath, amsfonts, amssymb, mathtools}
\usepackage{graphicx}
\usepackage{mathrsfs}
\usepackage{dirtytalk}
\usepackage{xcolor}
\usepackage{chngcntr}
\usepackage{dsfont}
\usepackage{mathrsfs}
\usepackage{amsthm}

\usepackage[total={5.2in,9.1in},
top=1.4in, left=1.55in, includefoot]{geometry}

\newcommand{\R}{\mathbb{R}}

\newcommand{\Z}{\mathbb{Z}}
\newcommand{\N}{\mathbb{N}}

\newcommand{\supp}{\text{supp}}

\newtheorem{theorem}{Theorem}

\newtheorem{proposition}{Proposition}

\newtheorem{lemma}{Lemma}

\newtheorem{corollary}{Corollary}

\newtheorem{definition}{Definition}

\newtheorem{remark}{Remark}

\makeatletter
\def\moverlay{\mathpalette\mov@rlay}
\def\mov@rlay#1#2{\leavevmode\vtop{%
   \baselineskip\z@skip \lineskiplimit-\maxdimen
   \ialign{\hfil$\m@th#1##$\hfil\cr#2\crcr}}}
\newcommand{\charfusion}[3][\mathord]{
    #1{\ifx#1\mathop\vphantom{#2}\fi
        \mathpalette\mov@rlay{#2\cr#3}
      }
    \ifx#1\mathop\expandafter\displaylimits\fi}
\makeatother

\counterwithin{theorem}{section}

\counterwithin{proposition}{section}

\counterwithin{lemma}{section}

\counterwithin{corollary}{section}

\counterwithin{definition}{section}

\counterwithin{remark}{section}

\title{Singular Suspension Flows and Infinite Topological Entropy}
\author{Jonatas Marinho S. Araujo and Sergio Roma\~na}
\date{}
\begin{document}

\maketitle

We construct a dense set of homeomorphisms with infinite topological entropy whose associated pseudo-singular suspension flows have finite entropy—and indeed, arbitrarily small positive values can be achieved—showing that infinite entropy is not preserved under singular time changes. Complementing this, we prove that for a residual set of homeomorphisms, all pseudo-singular suspensions retain positive entropy. We also prove that for any $n\geq 2$, there exists a compact n-dimensional manifold admitting a minimal homeomorphism with infinite topological entropy; for such minimal homeomorphisms, a suitably chosen pseudo-singular suspension with a single singularity has zero entropy. Our results reveal that while positivity of entropy is generically stable, its infinitude is fragile under singular reparametrizations.


\section{Introduction}
In its most general formulation, Dynamical Systems is the study of group actions on spaces. Its origins can be traced to Henri Poincaré's investigations in celestial mechanics, where both continuous-time flows and discrete-time maps—and the interplay between them—already appeared. This interplay naturally raises the following fundamental question: which dynamical properties are preserved when passing from flows to maps, and vice versa?

The principal tools relating these two types of actions are the Poincaré map derived from a flow and the suspension flow constructed from a map. One then asks: what properties are preserved under these operations? In this work, we examine this question specifically for suspension flows and their entropy.

Entropy is a central concept in dynamical systems, measuring the exponential complexity of orbits. Positive entropy provides a quantitative notion of chaos. Two main notions exist: \emph{topological entropy} for continuous systems and \emph{metric entropy} for measure-preserving systems. The Variational Principle connects them, stating that topological entropy is the supremum of metric entropies over all invariant probability measures. For classical suspension flows, Abramov's formula shows that entropy is preserved: the suspension flow has positive (topological or metric) entropy exactly when the underlying map does. Thus, suspension flows are as complex as the maps from which they are derived.

Over the past decades, much work has been devoted to suspension flows and, more generally, to equivalent flows—flows sharing the same orbits. Unlike the map case, flows present an additional feature: singularities, i.e., fixed points of the flow. Flows without singularities are called nonsingular. Totoki \cite{Totoki1966} proved that the sign of metric entropy is preserved for nonsingular equivalent flows: it is either zero, positive, or infinite. Ohno \cite{Ohno1980} established the analogous result for topological entropy. However, Ohno also showed in the same paper that one can find equivalent flows with the same orbits but different entropy magnitudes—one with zero entropy and the other with infinite entropy.

Ohno further asked whether such examples with different entropy signs could be found in the differentiable setting. Sun, Zhou, and others \cite{WTY2009} answered this affirmatively by considering minimal diffeomorphisms with positive topological entropy and constructing singular reparametrizations of their suspension flows, which they called \emph{singular suspension flows}. They produced two such flows with the same orbits, one with positive topological entropy and the other with zero entropy. In particular, this shows that singular suspension flows derived from the same map can have different entropy signs, contrasting with the classical suspension case.

Singular suspension flows are of independent interest. Rego and Roma\~na \cite{RomanaRego2025} studied their entropy for diffeomorphisms (smooth case) and showed that, generically, when the singularities form a compact and countable set, the singular flow has positive entropy if the diffeomorphism does, and zero entropy if the diffeomorphism does.

In the present work, we study pseudo-singular suspension flows of homeomorphisms. Although similar in nature to the smooth case, the reduction of regularity makes the continuous setting more delicate. \\
Yano \cite{Yano1980} proved that a residual set of homeomorphisms on compact manifolds of dimension at least two possesses infinite topological entropy. This raises the natural question:\\
\ \\
\noindent
{\textbf{Question:} If a homeomorphism has infinite entropy, is it true that the pseudo-singular suspension flow also has infinite entropy?}\\
\noindent

We answer the question above negatively. This is the content of the next theorem.

\begin{theorem}\label{thmmain}
    There exist a dense set of homeomorphisms $\mathfrak{R}_{0}$ of a closed manifold $M$ such that each $f \in \mathfrak{R}_{0}$ has infinite topological entropy and there exists pseudo-singular suspension flow obtained from $f$ with at most one singularity with finite topological entropy.
\end{theorem}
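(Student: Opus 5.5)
Throughout, the \emph{pseudo-singular suspension} of a homeomorphism $f$ is the suspension flow $X_f$ on the mapping torus $M_f$, time-changed by a continuous function $\rho\ge 0$ whose zero set is the (here single) singularity. The flow lines are those of $X_f$, but the speed vanishes at the zero of $\rho$.

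The plan is to build $\mathfrak{R}_0$ by local surgery, so that the infinite entropy of $f$ is concentrated near a single fixed point $p$, which we then make singular. Start from an arbitrary homeomorphism $g$ and $\varepsilon>0$. By density of homeomorphisms with a periodic, and after perturbation a fixed, point, assume $g$ has a fixed point $p$. Replace $g$ in a small ball $B$ around $p$ by a modification $f$ with $d_{C^0}(f,g)<\varepsilon$. Inside $B$, take a sequence of disjoint closed boxes $B_k$ shrinking to $p$, each invariant under $f$. On $B_k$ put a horseshoe-type map of entropy at least $k$, equal to the identity on $\partial B_k$; this is Yano's construction. Outside $\bigcup_k B_k$, keep $f$ agreeing with $g$, suitably interpolated. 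Since the $B_k$ shrink to $p$ and $f|_{B_k}$ moves points by at most $\operatorname{diam} B_k$, $f$ is continuous at $p$. Moreover $h_{top}(f)\ge \sup_k h_{top}(f|_{B_k})=\infty$. Such $f$ are $\varepsilon$-close to arbitrary $g$, which gives density.

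Next, take the suspension flow of $f$ on $M_f$ and choose a continuous $\rho\ge 0$ vanishing only at the point $\bar p=(p,0)$. Concretely, on the tube $B_k\times[0,1]$ let $\rho$ be small, of size $\rho_k\to 0$, chosen so that the return time to the section on $B_k$ is at least $T_k$ with $T_k\to\infty$ fast. The resulting flow $\phi^\rho$ is continuous and has a single singularity at $\bar p$. For the entropy computation, use the variational principle for $\phi^\rho$. An ergodic measure either is the Dirac mass at $\bar p$, which has entropy $0$, or corresponds to an ergodic $f$-invariant $\mu$ with $\mu(\{p\})=0$ and integrable return time $\tau$. For the latter, Abramov's formula gives $h(\phi^\rho_1,\tilde\mu)=h_\mu(f)/\int\tau\,d\mu$. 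If $\mu$ is supported in $B_k$, this is at most $h_{top}(f|_{B_k})/T_k$, which we make at most $1$ (or at most $\delta$, to get arbitrarily small values) by choosing $T_k\ge h_{top}(f|_{B_k})/\delta$. Measures supported outside $\bigcup_k B_k$ see only the dynamics of $g$ there. We need the entropy of $f$ away from $p$ to be finite, so we also arrange $\rho$ to be small on the finite-entropy part if necessary, or start from $g$ with finite entropy, which is a dense set. Then we get $\sup \le \max(\delta,\, h_{top}(g)/\min\tau)<\infty$.

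The main obstacle is the claim that the topological entropy of the flow equals the supremum over these ergodic measures. For non-compact-friendly singular flows the variational principle does still hold, since $M_f$ is compact and $\phi^\rho$ is continuous. The real subtlety is that measures of $f$ giving mass to several $B_k$ are impossible, because the $B_k$ are invariant, so every ergodic measure lives in one $B_k$ or in the complement. The remaining delicate point is controlling the time-change near $p$ uniformly while keeping $\rho$ continuous. I would handle this by defining $\rho(x,t)=\min\{d(x,p),\,\rho_k\}$-type expressions, which are continuous because $\rho_k\to 0$ as $B_k\to p$.
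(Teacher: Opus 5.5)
Your strategy is the paper's: high-entropy pieces accumulating at one periodic point, a brake that makes the return time over the $k$-th piece at least $T_k$, and Abramov's formula combined with the variational principle. Making the boxes $B_k$ invariant, so that every ergodic measure lives in a single $B_k$ or in the complement, is a clean substitute for the paper's use of Bowen's non-wandering lemma.

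The step that fails is the construction of the brake. You ask for $\rho$ of size $\rho_k\to 0$ on the \emph{whole} tube $B_k\times[0,1]$, and your formula $\min\{d(x,p),\rho_k\}$ does not depend on the height $t$. For each $t\in[0,1]$ the points $(x_k,t)$ with $x_k\in B_k$ converge to $(p,t)$, so continuity forces $\rho(p,t)=0$ for every $t$. The zero set of $\rho$ is therefore the entire closed orbit of $\bar p$, not the single point $\bar p$. Along that circle the time change vanishes identically, so each of its points is fixed by the new flow. You get a continuum of singularities, contradicting both your claim that $\rho$ vanishes only at $\bar p$ and the ``at most one singularity'' in the statement.

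The missing idea is that the slow region must also shrink in the flow direction. Require $\rho\le h_k/T_k$ only on a slab $S_k=B_k\times[0,h_k]$ with $h_k\to 0$. The return time over $B_k$ is still at least $h_k\cdot T_k/h_k=T_k$, while the slabs accumulate only at $\bar p$. Continuity at $\bar p$ then forces $\rho$ to be small on a full neighbourhood of $\bar p$, not just on the slabs. You can achieve this with $\rho(z)=\min\{1,\eta(d(z,\bar p))\}$, where $d$ is the metric of the suspension space and $\eta$ is increasing, $\eta(0)=0$, $\eta>0$ on $(0,\infty)$, and small enough that $\rho\le h_k/T_k$ on $S_k$; this is possible because $S_k\to\bar p$. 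This $\rho$ vanishes only at $\bar p$. It plays exactly the role of the paper's sets $\tilde V_i$ of height $C/i$ with $\alpha|_{\tilde V_i}\equiv\gamma_i$, which give $\gamma\ge (C/i)\gamma_i^{-1}$ on $V_i$.

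Two smaller points also need repair. First, a periodic point cannot in general be perturbed into a fixed point. If $g$ is the translation of $\mathbb{T}^2$ by $(1/2,0)$, every homeomorphism $C^0$-closer than $1/2$ to $g$ is fixed-point free. Work instead, as the paper does with its point $A$, with a periodic orbit of some period $m$ and boxes satisfying $f^m(B_k)=B_k$, so that $B_k\cup f(B_k)\cup\dots\cup f^{m-1}(B_k)$ is invariant; the rest of your argument adapts directly.

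Second, finiteness of entropy off the boxes does not follow from $h_{top}(g)<\infty$. What enters your final bound is $h_{top}(f|_Y)$ with $Y=M\setminus\bigcup_k\operatorname{int}B_k$, and this depends on how you interpolate between $g$ and the local model near $p$. That interpolation has to be controlled; the paper gets the corresponding fact by keeping $f$ of class $C^r$ off the horseshoe neighbourhoods. Likewise, the box maps must have $h_{top}(f|_{B_k})<\infty$, not merely $\ge k$, for the choice $T_k\ge h_{top}(f|_{B_k})/\delta$ to make sense.
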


Moreover, the technique used in the proof shows that the topological entropy of the pseudo-singular suspension flow can be made arbitrarily small.

Complementing this, we prove an analogue of the theorem of Rego and Romaña \cite{RomanaRego2025} for homeomorphisms:

\begin{theorem}\label{thmcomp}
    There exists a residual set $\mathfrak{R}$ of $\text{Homeo}(M)$ such that for every $f \in \mathfrak{R}$ and $\alpha:M \to \R$ continuous such that $\alpha^{-1}(0)$ is compact and countable, the pseudo-singular suspension flow has positive entropy.
\end{theorem}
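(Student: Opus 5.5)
We need to plan a proof of Theorem \ref{thmcomp}. The idea is to find a residual set of homeomorphisms that carry "robust" positive entropy on a piece of the manifold that the singular reparametrization cannot destroy.

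The plan is to exploit the fact that, for a generic homeomorphism, positive entropy is carried by compact invariant sets on which the dynamics is a full shift. These sets can be found away from any prescribed countable compact set. Concretely, let $\mathfrak{R}$ be the residual set (in the spirit of Yano \cite{Yano1980}) of homeomorphisms $f$ with the following property. For every $\epsilon>0$ and every point $p$, the map $f$ has infinitely many pairwise disjoint compact invariant sets $\Lambda$, each with some $f^k|_\Lambda$ topologically conjugate to a full shift on at least two symbols. More precisely, we would like uncountably many such horseshoes with pairwise disjoint supports; if only countably many are available, we will at least arrange that a horseshoe avoids any given countable set.

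To obtain genericity, we argue as follows. Horseshoes for homeomorphisms are produced by local perturbations near periodic points, in the standard way for $C^0$ perturbations in dimension $\geq 2$. They persist in the $C^0$-robust sense of topological crossings: a family of "boxes" mapped across each other with the correct topological degree gives a semiconjugacy onto a shift. Such a crossing condition is open in $\text{Homeo}(M)$, and density comes from the local perturbation. Intersecting the resulting open dense sets over a countable basis of the manifold gives the residual set.

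Next, fix $f\in\mathfrak{R}$ and $\alpha$ with $C=\alpha^{-1}(0)$ compact and countable. The pseudo-singular suspension flow is a reparametrization of the standard suspension flow $\phi$ on $M_f$ with speed factor given by $\alpha$, singular exactly over $C$. The key step is to find an $f$-invariant compact set $\Lambda\subset M\setminus C$ carrying positive entropy. Since $C$ is countable and the robust horseshoe family we extracted is uncountable, with pairwise disjoint members (for example a Cantor family of sub-horseshoes with distinct symbolic codings), some horseshoe $\Lambda$ misses $C$. If the genericity only gives a countable family, we instead use the semiconjugacy: the preimage of a full shift contains uncountably many disjoint closed invariant subsets of positive entropy, namely the lifts of subshifts of positive entropy on disjoint symbol sets.

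On $\Lambda$, $|\alpha|$ is bounded below by some $c>0$, by compactness and continuity. The suspension of $\Lambda$ is then a compact invariant set of the singular flow containing no singularities, on which the singular flow is a nonsingular reparametrization of the classical suspension with time change bounded between $c$ and $\max|\alpha|$. By Abramov's formula, the classical suspension over $\Lambda$ has entropy $h(f|_\Lambda)/\int r>0$. By Ohno's result \cite{Ohno1980} (or directly, since the time change is bi-Lipschitz in time), positivity passes to the reparametrized flow restricted to this set. Finally, the entropy of the whole flow dominates the entropy of its restriction to an invariant compact set, which gives positive entropy.

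The main obstacle is the genericity step: producing, for a residual set, positive-entropy invariant sets that can be made to avoid an arbitrary countable compact set $C$ which is not known in advance. I would first try to prove that a generic $f$ has a horseshoe whose conjugacy to the shift gives uncountably many disjoint invariant subsets of positive entropy. This requires the horseshoe to be a genuine conjugacy (or a semiconjugacy whose fibers are disjoint closed sets). I would arrange that by building the boxes so that the fibers are the nested intersections, which are automatically disjoint for distinct codes.
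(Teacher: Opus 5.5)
Your route is the paper's: take Yano's generic set, use a pseudo-horseshoe on which some power $f^k$ maps onto a full shift, produce uncountably many pairwise disjoint positive-entropy invariant pieces, discard the countably many that meet the singular set, and conclude on a remaining piece by Abramov/time change. However, the step that carries the whole argument is not actually supplied. That step is the existence of an uncountable family of pairwise disjoint closed $\sigma$-invariant sets of positive entropy. The one concrete family you name, lifts of subshifts on disjoint symbol sets, is finite. An $N$-symbol shift has at most $N/2$ pairwise disjoint sub-alphabets with at least two letters, and a one-letter subshift has zero entropy. The ``Cantor family of sub-horseshoes'' is left unspecified. With only countably many pieces, the countable set $\pi(\alpha^{-1}(0))$ could meet every one of them, and the argument collapses.

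The fix is the paper's: for uncountably many $c>0$ take a minimal subshift $Y_c$ of entropy $c$ (Grillenberger); distinct minimal sets are disjoint. An elementary substitute works inside a $4$-symbol shift, for example after passing to $\sigma^2$. Use $X_\theta\times\{0,1\}^{\Z}$ with $X_\theta$ the Sturmian subshift of irrational rotation number $\theta$. These sets have entropy $\log 2$ and are pairwise disjoint, because every point of $X_\theta$ has frequency of $1$'s equal to $\theta$.

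You also skip the passage from $f^k$ to $f$. The set $g^{-1}(Y)$ is only $f^k$-invariant, so set $K_Y=\bigcup_{0\le i<k}f^i(g^{-1}(Y))$. Since the $g^{-1}(Y)$ are pairwise disjoint, each point lies in at most $k$ of the $K_Y$, so only countably many $K_Y$ meet $\pi(\alpha^{-1}(0))$. Note that it is this projection you must avoid, not $C$ itself, since the brake lives on the suspension space.

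Once this is repaired, the rest is sound, and in two respects tidier than the paper's proof.

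\begin{itemize}
\item Your closing worry about needing a genuine conjugacy is unfounded. Fibres of any map are disjoint, and $g$ maps $g^{-1}(Y)$ equivariantly onto $Y$, so $h_{top}(f^k|_{g^{-1}(Y)})\ge h_{top}(\sigma|_Y)>0$. A semiconjugacy, which is what $C^0$-robust crossings naturally give, suffices; the paper instead asserts a homeomorphic conjugacy.
\item You avoid all zeros of $\alpha$ rather than just $A_{sing}$. This gives $\alpha\ge c>0$ on $\pi^{-1}(K_Y)$, so there $\psi$ is a nonsingular time change of the suspension, equivalently the suspension of $f|_{K_Y}$ under the bounded roof $\gamma$. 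Ohno's theorem, or the Abramov formula as in the paper, then yields positive entropy without needing the continuity of $\gamma$ off $A_{sing}$.
\end{itemize}
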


Our arguments are based on \cite{RomanaRego2025} to obtain the positivity of the entropy, using pseudo-horseshoe structures and invariant measures to derive lower bounds.  We note that $\mathfrak{R}_{0} \subset \mathfrak{R},$ so all pseudo-singular suspensions obtained in \ref{thmmain} have positive entropy. It is worth noting that the techniques developed in \cite{WTY2009} and in \cite{RomanaRego2025} only yield positivity of topological entropy, without providing any control over its upper bound. Moreover, the lower regularity in the continuous setting makes singularities significantly more delicate to treat than in the smooth case. \\

Finally, we turn our attention to minimal homeomorphisms. Minimal homeomorphisms with positive entropy are known to be rare in connected manifolds; a few examples exist in higher dimensions within the smooth category 
\cite{WTY2009}. Since these examples are differentiable, their topological entropy is necessarily finite. Consequently, the existence of smooth manifolds admitting minimal homeomorphisms with infinite topological entropy is a considerably more delicate matter. In our last result, we employ a surgical construction—combining the embedding theorem of Beguin-Crovisier-Le Roux \cite{FSF2007} with Grillenberger's construction of minimal Cantor systems with arbitrary entropy \cite{GR72}—to establish the following:

\begin{theorem}\label{existence of minimal homeo}
    For any $n\geq 2$, there exists a compact $n$-dimensional manifold admitting a minimal homeomorphism with infinite topological entropy.
\end{theorem}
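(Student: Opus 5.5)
The plan is to realize a minimal Cantor system of infinite entropy inside a manifold, using the two tools named in the introduction. First, Grillenberger's construction \cite{GR72} produces, for every $h\in(0,\infty)$, a strictly ergodic (in particular minimal) subshift of topological entropy $h$. Subshifts over a finite alphabet have finite entropy, so infinite entropy has to come from a product or limit. I would take minimal Cantor systems $(K_j,\sigma_j)$ with $h(\sigma_j)=j$, arranged so that the countable product $K=\prod_j K_j$ with $\sigma=\prod_j\sigma_j$ is still minimal. The product is a Cantor set and has entropy $\sum_j h(\sigma_j)=\infty$. Minimality of a product is not automatic. I would secure it by choosing the factors pairwise disjoint in the sense of Furstenberg, for instance by making their maximal equicontinuous factors have rationally independent eigenvalue groups. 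Alternatively, I would run a Grillenberger-type block construction directly over a countable alphabet, with increasing entropy contributions and uniform recurrence of all blocks, and get a minimal Cantor system of infinite entropy outright.

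Second, I would invoke the Beguin--Crovisier--Le Roux embedding theorem \cite{FSF2007}. Its relevant consequence is the following. Let $M$ be a compact $n$-manifold, $n\ge2$, carrying a minimal homeomorphism $R$ that is homotopic to the identity, for example a minimal rotation on $\mathbb{T}^n$ or a suitable minimal map on another manifold. Let $(K,\sigma)$ be a Cantor system that has $R$ as a topological factor and in which the points where the factor map is not injective form a suitable set. Then there is a homeomorphism $f$ of $M$, isotopic to $R$, with a minimal invariant Cantor set on which $f$ is conjugate to $\sigma$, and with $f$ semiconjugate to $R$ via a map that collapses only countably many arcs. I would arrange the factor hypothesis by adding the rotation $R$ itself, or a Cantor extension of it such as a Denjoy/Sturmian-type extension, as one coordinate of the product in Step~1, so that $(K,\sigma)$ factors onto $R$. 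Because all the other coordinates are disjoint from $R$, minimality of the product survives.

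Third, $f$ must be minimal on all of $M$, not just on the Cantor set. In the BCLR construction the homeomorphism $f$ is obtained so that every orbit outside the Cantor set accumulates on it and the semiconjugacy to the minimal $R$ is almost one-to-one. This yields minimality, since every orbit closure maps onto $M$ and contains the invariant Cantor set, which is minimal. Entropy is then at least $h(\sigma|_K)=\infty$, by monotonicity of entropy under restriction to invariant subsets. Since $n\ge2$ is arbitrary, $M=\mathbb{T}^n$ works.

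The main obstacle is the compatibility in Steps~1--2. The Cantor system must be simultaneously minimal, of infinite entropy, and an extension of a minimal manifold homeomorphism, all in the precise form the embedding theorem requires. I would first try to build $K$ as a skew product over a Sturmian/Denjoy Cantor extension of an irrational rotation, with Grillenberger-type fibers of entropy $j$ on nested levels. I would then check uniform recurrence by hand, using the block-concatenation structure of Grillenberger's words.
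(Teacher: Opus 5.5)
\textbf{There is a genuine gap.} Your Steps~2 and~3 are incompatible, and this is where the argument breaks. You get $h_{top}(f)=\infty$ by restricting $f$ to an invariant Cantor set $K\subset M$ on which $f$ is conjugate to $\sigma$, but you also want $f$ minimal on $M$. A minimal homeomorphism has no closed invariant sets other than $\varnothing$ and $M$. Your $K$ is closed, invariant, nonempty and different from $M$. Indeed, for $x\in K$ the orbit closure is $K$, so your minimality argument already fails at the points of $K$.

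The embedding statement you attribute to B\'eguin--Crovisier--Le Roux is also not their theorem, and as you formulate it, it would destroy the entropy. Suppose $\Phi$ semiconjugates $f$ to an irrational rotation $R$ and is injective off countably many collapsed arcs. Every ergodic $f$-invariant $\nu$ has $\Phi_*\nu=\mathrm{Leb}$, so $\nu$ gives zero mass to the collapsed fibres. Then $\Phi$ is a measurable isomorphism between $(f,\nu)$ and $(R,\mathrm{Leb})$, hence $h_\nu(f)=0$ for every $\nu$, and $h_{top}(f)=0$ by the variational principle.

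The missing idea is that for a minimal $f$ on a manifold, the entropy cannot sit on a closed subsystem; it has to be carried measurably. This is exactly what \cite{FSF2007} provides, and it is how the paper argues. Take a minimal uniquely ergodic rotation $R$ of $\mathbb{T}^n$ and an \emph{arbitrary} Cantor homeomorphism $h_C$; no factor map from $h_C$ to $R$ is required. The theorem gives a minimal homeomorphism $f$ of $\mathbb{T}^n$, a topological extension of $R$, that is measurably isomorphic to $R\times h_C$. The paper takes $h_C$ minimal with infinite entropy (Grillenberger). Each measure $\mathrm{Leb}\times\mu$, with $\mu$ an $h_C$-invariant measure, then transports to an $f$-invariant $\nu$ with $h_\nu(f)=h_\mu(h_C)$. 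The variational principle, applied to $h_C$ and then to $f$, gives $h_{top}(f)=\infty$. So the Sturmian coordinates and the factor hypothesis in your Step~2 are unnecessary.

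Separately, your minimality criterion in Step~1 does not work. Independent eigenvalue groups do not imply disjointness: a nontrivial minimal weakly mixing $X$ has trivial eigenvalue group, yet the diagonal shows $X\times X$ is not minimal. Grillenberger's systems are weakly mixing, so the criterion gives you nothing there. You should simply cite the existence of a minimal Cantor system with infinite entropy.
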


Moreover, following the arguments of \cite{WTY2009}, we will show that for such minimal homeomorphisms, one can choose a pseudo-singular suspension with a single singularity whose topological entropy is zero.

\
\noindent

\textbf{Organization of the paper}: in section $2$, we provide the basic definitions necessary for the comprehension of the text, such as orbits of discrete and continuous dynamical systems, the formal definition of topological and metric entropy and homeomorphisms with infinite entropy. In section $3$, we recall the definition of the reparametrization of continuous flows. In this section, we discuss the definition of pseudo-singular reparametrizations of a continuous flow and some of their properties, such as the $\gamma$ function. In section $4$, we prove Theorem \ref{thmmain}, \ref{thmcomp} and \ref{existence of minimal homeo}.

\section{Preliminaries}
\noindent
We now recall some basic definitions.\\

Let $(M,d)$ be a metric space. We say $R \subset M$ is \textit{residual} or \textit{generic} when it is the countable intersection of open and dense sets. Unless stated otherwise, this is the meaning of this word throughout this text.

Unless stated otherwise, $M$ is a compact topological manifold and $f:M\to M$ is a homeomorphism. Let $d$ be a metric compatible with the topology of the manifold. Define

$$\tilde{d}(f,g) : = d_{C_{0}}(f,g) + d_{C_{0}}(f^{-1},g^{-1})$$
\noindent
for $f,g \in \text{Homeo}(M).$ This defines a metric on $\text{Homeo}(M).$ The topology generated by such metric is called the \textit{$C^{0}$ topology.}

\subsection{Discrete and continuous-time Dynamical Systems}

\

\noindent

Given $f \in Homeo(M)$ we define the \textit{orbit} of $x \in M$ under $f$ by

$$\mathcal{O}_{f}(x) = \{f^{n}(x): n \in \Z\},$$
\noindent
where $f^{n} = f \circ \cdots \circ f$ $n$ times if $n > 0,$ and $f^{n} = f^{-1} \circ \cdots \circ f^{-1}$ $-n$ times if $n < 0,$ and $f^{0} = Id_{M}.$

We say a family of homeomorphisms $\varphi = (\varphi^{t})_{t \in \R}$ of $M$ is called a \textit{flow} when $f^{t+s} = f^{t} \circ f^{s}$ for all $t,s \in \R.$ We define the orbit of $x \in M$ under $\varphi$ by

$$\mathcal{O}_{\varphi}(x) = \{\varphi^{t}(x): t \in \R\},$$

We say $x \in M$ is a \textit{singularity} when $\varphi^{t}(x) = x$ for all $t \in \R.$ We denote the set of singularities of $\varphi$ by $Sing(\varphi).$ When $Sing(\varphi) = \varnothing$, we say the flow is \textit{nonsingular}, and when $Sing(\varphi) \neq \varnothing$, we say the flow is \textit{singular.}

\begin{definition}We say $(f,M)$ is minimal if the only compact invariant sets of $f$ are $\varnothing$ and $M.$ 

\end{definition}

Let $M$ be a compact metric space, let $f:M \to M$ be a homeomorphism, and let $r:M \to \R_{> 0}$ be a continuous function. We define

$$\overline{M} := \{(t,x) ; x \in M, t \in [0,r(x)]\}/\sim,$$
\noindent
where $(r(x),x)\sim (0,f(x)).$ It is known that $\overline{M}$ is a compact metric space. We also define $\pi:\overline{M}\to M$ to be the projection on the second coordinate.

\begin{definition}[Suspension Flows]
    We define the \textit{suspension flow of $f$ with roof function $r$} as the flow $(f^{t})_{t}:\overline{M} \to \overline{M}$ given by $f^{t}([s,x]) =(f^{n}(x),s'),$ where the integer $n$ e and real number $s'$ satisfy

$$\sum\limits_{j=1}^{n-1}r(f^{j}(x)) + s' = t+s, \hspace{0.5cm} 0 \leq s' \leq r(f^{n}(x)).$$

\end{definition}

The flow defined above is clearly continuous.\\

Now we define the topological entropy. Take $n \in \N.$ We define the metric $d_{n}$ in $M$ by $d_{n}(x,y) = \max\limits_{0 \leq i \leq n-1}d(f^{i}(x),f^{i}(y)).$ We say $X \subset M$ is $(n,\varepsilon)-$separated subset if $d_{n}(x,y) > \varepsilon$ for every $x,y \in X,$ where $x \neq y.$ We define $\text{sep}(n,\varepsilon,f)$ to be the maximal cardinality of a $(n,\varepsilon)-$separated subset.

\begin{definition}[Topological entropy of for maps]
We define the topological entropy of $f \in \text{Homeo}(M)$ by

$$h_{top}(f) = \lim\limits_{\varepsilon \to 0}\limsup\limits_{n\to \infty}\dfrac{1}{n}\text{sep}(n,\varepsilon,f).$$
\end{definition}

Now let $\varphi$ be a flow. 

\begin{definition}[Topological entropy for flows]
    We define the topological entropy of $\varphi$ as $h_{top}(\varphi) := h_{top}({\varphi^{1}}).$
\end{definition}

\

Now endow $M$ with the Borel $\sigma-$algebra. We say a probability measure in $M$ is $f-$\textit{invariant} when $\mu(f^{-1}(A)) = \mu(A)$ for all measurable sets $A \subset M.$ A $f-$invariant measure is \textit{ergodic} when $\mu(f^{-1}(A)\Delta A) = 0$ implies $\mu(A) \in \{0,1\},$ where $A \Delta B := (A \backslash B) \cup (B \backslash A)$ is the symmetric difference of the sets $A$ and $B.$ We denote by $\mathcal{M}(f)$ the set of $f-$invariant measures and $\mathcal{M}_{e}(f)$ the set of ergodic measures of $f.$

We also have a notion of entropy for systems with an invariant probability measure: Let $\mathcal{P}$ be a partition of $M$ into finite measurable sets and define $\mathcal{P}^{n} = \mathcal{P} \lor f^{-1}(\mathcal{P})\lor \cdots \lor f^{-n+1}(\mathcal{P})$, where $\lor$ means the refinement of the partitions. $\mathcal{P}^{n}$ is the partition of the itineraries of $f$ up to time $n$ subordinate to $\mathcal{P}.$ We define the metric entropy of a $f-$invariant measure $\mu$ by

$$h_{\mu}(f) = \sup\{h_{\mu}(f,\mathcal{P}): \mathcal{P} \text{ is a measurable and finite partition of }M\},$$

where $h_{\mu}(f,\mathcal{P}) := \lim\limits_{n \to \infty} \dfrac{1}{n}H(\mathcal{P}^{n})$ and $H(\mathcal{P}) = -\sum\limits_{P \in \mathcal{P}}\mu(P)\log(\mu(P)).$

The metric entropy and the topological entropy are related by the Variational Principle, which we will now state:

\begin{theorem}[Variational Principle]
    Let $f:M\to M$ be a continuous map. Then:
    $$h_{top}(f) = \sup\limits_{\mu \in M(f)}h_{\mu}(f) = \sup\limits_{\mu \in \mathcal{M}_{e}(f)}h_{\mu}(f).$$
\end{theorem}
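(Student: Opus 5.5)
This is the classical Variational Principle, valid for any continuous map $f$ of a compact metric space $M$. I would prove it as three inequalities, following Misiurewicz's argument:
\[
\sup_{\mu\in\mathcal{M}_e(f)}h_\mu(f)\le\sup_{\mu\in\mathcal{M}(f)}h_\mu(f)\le h_{top}(f)\le\sup_{\mu\in\mathcal{M}(f)}h_\mu(f),
\]
together with the reverse of the first inequality. Throughout, the quantity in the definition of $h_{top}$ is read as $\frac1n\log\text{sep}(n,\varepsilon,f)$.

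\textbf{Step 1: $h_\mu(f)\le h_{top}(f)$ for every $\mu\in\mathcal{M}(f)$.}
\begin{enumerate}
\item Fix $\mu$ and a finite measurable partition $\mathcal{P}=\{P_1,\dots,P_k\}$. By regularity of $\mu$, choose compact sets $K_i\subset P_i$ with $\mu(P_i\setminus K_i)$ small.
\item Let $K_0$ be the complement of $K_1\cup\cdots\cup K_k$ and $\mathcal{Q}=\{K_0,K_1,\dots,K_k\}$. Then the conditional entropy $H_\mu(\mathcal{P}\mid\mathcal{Q})\le 1$, so $h_\mu(f,\mathcal{P})\le h_\mu(f,\mathcal{Q})+1$.
\item The sets $K_0\cup K_i$ form an open cover by $k$ sets. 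Hence each element of $\mathcal{Q}^n$ is determined, up to a factor $2^n$, by a choice of elements of this cover. Taking $\varepsilon$ less than half the minimum distance between the $K_i$ ($i\ge1$), a counting argument gives
\[
H_\mu(\mathcal{Q}^n)\le \log \#\{\text{nonempty elements of }\mathcal{Q}^n\}\le n\log 2+\log\text{sep}(n,\varepsilon,f).
\]
\item Dividing by $n$ yields $h_\mu(f)\le h_{top}(f)+\log 2+1$.
\item Apply this to $f^m$, using $h_\mu(f^m)=m\,h_\mu(f)$ and $h_{top}(f^m)=m\,h_{top}(f)$. Dividing by $m$ and letting $m\to\infty$ removes the additive constant.
\end{enumerate}

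\textbf{Step 2: $h_{top}(f)\le\sup_\mu h_\mu(f)$ (Misiurewicz's construction).}
\begin{enumerate}
\item Fix $\varepsilon>0$. Let $E_n$ be a maximal $(n,\varepsilon)$-separated set, $\sigma_n$ the uniform probability on $E_n$, and
\[
\mu_n=\frac1n\sum_{i=0}^{n-1}f^i_*\sigma_n .
\]
\item Choose a subsequence $n_j$ realizing $\limsup\frac1n\log|E_n|$ along which $\mu_{n_j}\to\mu$ weak-$*$. The limit $\mu$ is $f$-invariant.
\item Choose a partition $\mathcal{P}$ with elements of diameter $<\varepsilon$ and $\mu(\partial P)=0$ for every $P\in\mathcal{P}$. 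Since each element of $\mathcal{P}^n$ contains at most one point of $E_n$, we get $H_{\sigma_n}(\mathcal{P}^n)=\log|E_n|$.
\item Fix $q<n$. For each residue $0\le r<q$, split $\{0,\dots,n-1\}$ into blocks $r+jq,\dots,r+jq+q-1$ plus at most $2q$ leftover times. Subadditivity gives
\[
\log|E_n|\le\sum_j H_{f^{r+jq}_*\sigma_n}(\mathcal{P}^q)+2q\log k .
\]
\item Sum over $r$ and use concavity of $H$ in the measure:
\[
\frac qn\log|E_n|\le H_{\mu_n}(\mathcal{P}^q)+\frac{2q^2}{n}\log k .
\]
\item Let $n=n_j\to\infty$. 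Since $\mu(\partial\mathcal{P}^q)=0$, we have $H_{\mu_n}(\mathcal{P}^q)\to H_\mu(\mathcal{P}^q)$. Divide by $q$ and let $q\to\infty$ to obtain
\[
\limsup_n\frac1n\log\text{sep}(n,\varepsilon,f)\le h_\mu(f)\le\sup_\mu h_\mu(f).
\]
\item Finally let $\varepsilon\to0$.
\end{enumerate}

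\textbf{Step 3: the supremum over ergodic measures.} Use the ergodic decomposition $\mu=\int\mu_x\,d\mu(x)$ together with Jacobs' formula $h_\mu(f)=\int h_{\mu_x}(f)\,d\mu(x)$. This shows that for each invariant $\mu$ some ergodic component satisfies $h_{\mu_x}(f)\ge h_\mu(f)-\delta$, which gives equality of the two suprema. If $h_{top}(f)=\infty$, which is the case of interest in this paper, the same argument shows that ergodic entropies are unbounded.

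The main obstacle is the combinatorial estimate in Step 2. There the block decomposition and the concavity of entropy must be balanced carefully against the weak-$*$ limit, and this balance is exactly why the partition must be chosen with $\mu$-null boundaries.
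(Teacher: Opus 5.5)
The paper does not prove this statement. It quotes the Variational Principle as a classical fact and uses it as a black box, so there is no argument of the authors' to compare yours against. Your proposal is a correct version of the standard textbook proof:
\begin{itemize}
\item Step 1 is the usual bound $h_\mu(f)\le h_{top}(f)$. You approximate $\mathcal{P}$ by compact sets, use $H_\mu(\mathcal{P}\mid\mathcal{Q})\le 1$, and apply the power trick with $f^m$ to remove the additive constant.
\item Step 2 is Misiurewicz's construction essentially verbatim. You correctly identify the $\mu$-null boundaries of $\mathcal{P}$ (hence of $\mathcal{P}^q$, by invariance of $\mu$) as what gives $H_{\mu_{n_j}}(\mathcal{P}^q)\to H_\mu(\mathcal{P}^q)$.
\item Step 3 uses the ergodic decomposition and Jacobs' formula to handle the supremum over ergodic measures. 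It covers the case $h_{top}(f)=\infty$, which is the one this paper actually relies on.
\end{itemize}

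Your one departure from the textbook is counting nonempty elements of $\mathcal{Q}^n$ with Bowen balls rather than with the open cover $\{K_0\cup K_i\}$. This is sound and fits the paper's separated-set definition. With the convention $d_n(x,y)>\varepsilon$, the closed Bowen balls of radius $\varepsilon$ about a maximal separated set cover $M$. Since $2\varepsilon$ is below the minimal distance between the $K_i$ ($i\ge 1$), for each $0\le i<n$ the image under $f^i$ of such a ball meets at most two elements of $\mathcal{Q}$. Hence each ball meets at most $2^n$ elements of $\mathcal{Q}^n$, and your sentence about the open cover is superfluous.

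Two small points to tighten:
\begin{itemize}
\item In Step 2, take $E_n$ of maximal cardinality, so that $|E_n|=\text{sep}(n,\varepsilon,f)$, not merely maximal under inclusion.
\item Your reading of $h_{top}$ with $\frac1n\log\text{sep}(n,\varepsilon,f)$ is the right one; the paper's displayed definition omits the logarithm.
\end{itemize}
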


\subsection{Homeomorphisms with infinite entropy}

\noindent

It is known that if $M$ is a closed manifold and $f:M\to M$ is smooth, then its topological entropy is finite. However, in the continuous case, we have a completely different typical phenomenon: infinite entropy. This is the content of Yano's Theorem which we now enunciate.

\begin{theorem}[Yano, 1980] \label{thmyano}

   Let $M$ be a compact manifold. If $\text{dim}(M) \geq 2,$ then for every positive number $K,$ the set $\tilde{E}_{K}(M) = \{f \in \text{Homeo}(M) ; h_{top}(f) \geq K\}$ contains an open and dense subset $\mathfrak{R}
   _{K}$ of $\text{Homeo}(M).$ Therefore, the set of homeomorphisms with infinite entropy contains $\mathfrak{R} = \bigcap_{k \geq 1}\mathfrak{R}_{k}$ which is residual in $\text{Homeo}(M).$ 

\end{theorem}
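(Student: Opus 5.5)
We prove Yano's theorem by a local perturbation argument that plants a \emph{topological horseshoe} with many symbols inside a small ball. Such a horseshoe is robust under $C^{0}$ perturbations, so it gives openness for free. Fix $K>0$. The goal is to show that every $f\in\text{Homeo}(M)$ can be approximated by some $g$ for which an open neighbourhood $\mathcal{U}_g$ satisfies $h_{top}(h)\geq K$ for all $h\in\mathcal{U}_g$. Setting $\mathfrak{R}_K=\bigcup_g \mathcal{U}_g$ over all such $g$ then produces an open and dense subset of $\tilde{E}_K(M)$. The final statement about $\mathfrak{R}=\bigcap_k\mathfrak{R}_k$ follows from Baire's theorem, since $\text{Homeo}(M)$ with $\tilde d$ is a complete metric space.

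\textbf{Step 1: a periodic point.} Given $f$ and $\delta>0$, first perturb $f$ by a $\delta$-small homeomorphism so that it has a periodic point $p$ of some period $m$. To do this, take a recurrent point $x$ (for instance from the support of an invariant measure) with $d(f^{m}(x),x)<\delta'$. Compose $f$ with a homeomorphism supported in a tiny ball around $f^{m}(x)$ that sends $f^{m}(x)$ to $x$. Shrinking the ball so it avoids $x,\dots,f^{m-1}(x)$ makes $x$ periodic. Next, using $\dim M\geq 2$, modify $f$ near the orbit so that $f^{m}$ coincides with the identity on a small closed ball $B$ around $p$, in suitable coordinates. This is again a small perturbation, because a homeomorphism fixing $p$ is $C^0$-close to the identity on a small enough neighbourhood.

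\textbf{Step 2: planting a horseshoe.} Choose $N$ with $\log N> mK$. Inside $B\cong[0,1]^n$, pick $N$ disjoint thin horizontal slabs $H_1,\dots,H_N$ and $N$ disjoint thin vertical slabs $V_1,\dots,V_N$. Replace $f^{m}|_B$ by a homeomorphism $\phi$ of $B$ that equals the identity near $\partial B$ and satisfies $\phi(H_i)\cap H_j\supset$ a "full crossing" for all $i,j$. Concretely, $\phi$ compresses the cube vertically, stretches it horizontally, and folds it $N$ times. In dimension $2$ this is the standard $N$-fold horseshoe, and in higher dimension we take it times the identity in the extra coordinates. Since the whole construction lives in the small ball $B$ and $\phi$ moves points by at most $\operatorname{diam}B$, the modified map $g$ satisfies $\tilde d(f,g)<\delta$. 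The only requirement is that $\operatorname{diam}B$ be small; the $N$ folds can be made arbitrarily fine inside $B$.

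\textbf{Step 3: robust entropy bound.} This is the main obstacle: a horseshoe that is merely topological has no hyperbolicity, so the entropy estimate must rest on crossing properties. We use the \emph{covering relations} of Zgliczyński--Gidea type, which are stable under $C^0$ perturbation. The condition that $g^m(H_i)$ crosses each $H_j$ "correctly", with the images of the vertical boundaries landing outside the slab on opposite sides, is an open condition in the $C^0$ topology because it involves compact sets. For $h$ close to $g$, a degree or connectedness argument shows that for every finite word $(i_0,\dots,i_{k-1})$ the set $\{x: h^{mj}(x)\in H_{i_j},\ 0\leq j<k\}$ is nonempty. Choose $\varepsilon$ smaller than the distance between distinct slabs. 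Picking one point for each word then gives an $(mk,\varepsilon)$-separated set of cardinality $N^k$ for $h$. Hence $h_{top}(h)\geq \frac{1}{m}\log N>K$ on a neighbourhood $\mathcal{U}_g$, which completes the proof.
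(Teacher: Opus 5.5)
Your overall architecture is the one behind Yano's theorem: a $C^0$-robust crossing structure gives an open set, a local surgery gives density, and Baire gives the residual set. However, the second half of Step~1 has a real gap.

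\textbf{Step~1: making $f^m$ the identity near $p$.} Changing $f$ only near $p$ so that $f^m$ is the identity on a ball $B\ni p$ requires a homeomorphism $\Psi$, supported in a small ball $U\ni p$, with $\Psi=(f^m)^{-1}$ on $f^m(B)$. In other words, you must extend the homeomorphism $(f^m)^{-1}\colon f^m(B)\to B$ between two tame balls to a homeomorphism of $U$ fixing $\partial U$. That $f^m$ moves points near $p$ by a small \emph{absolute} amount does not address this extension problem. Local-contractibility results of Edwards--Kirby type need closeness relative to the radius of $B$, and that already fails for a germ like $y\mapsto 2y$.

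The claim as stated can in fact be false. If $M$ is orientable, $f$ reverses orientation and the return time $m$ is odd, then every $g$ that is $C^0$-close to $f$ is homotopic to $f$. So $g^m$ has degree $-1$ and cannot be the identity on any open set.

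For orientation-preserving germs the extension does exist, but this is precisely the stability of the germ of $f^m$ at $p$. That is the Schoenflies theorem for $n=2$ and the stable homeomorphism (annulus) theorem of Moise, Kirby and Quinn for $n\geq 3$. You must either invoke these results (after choosing a return time for which $f^m$ preserves local orientation) or avoid the step.

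It can be avoided, because covering relations may be chained through different maps:
\begin{itemize}
\item Keep $F=f^m$ as it is and take thin parallel slabs $N_1,\dots,N_N$ near $p$.
\item $F$ sends all their left end faces close to a point $a$ and all right end faces close to a point $b\neq a$.
\item Hence each $F(N_i)$ crosses one thin wall $W$ orthogonal to $b-a$ and transversally wide enough to contain $F(\bigcup_i N_i)$. This covering relation holds for \emph{any} homeomorphism.
\item Choose an explicit $\Psi$ supported near $p$ that stretches $W$ along $b-a$, contracts it transversally and folds it so that $\Psi(W)$ crosses every $N_j$. Put $g=\Psi\circ f$.
\item For $h$ close to $g$, the identity $h^m=(h g^{-1}\Psi)\circ(\Psi^{-1} g h^{m-1})$ writes $h^m$ as a composition of maps $C^0$-close to $\Psi$ and to $F$. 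So both relations persist.
\end{itemize}

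\textbf{Step~2: the identity factor breaks Step~3.} If $\phi$ is the planar horseshoe times the identity in the remaining $n-2$ coordinates, then $\phi(H_i)$ is not strictly inside $H_j$ in those coordinates. A covering relation also requires the image to avoid the entry set of the target; you only check the exit condition on the vertical boundaries.

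This is not a technicality. Compose $g$ with a push of size $\epsilon$ in one of those coordinates, supported near $B$. This gives $h$ arbitrarily close to $g$ with $h^m=\phi\circ T_\epsilon$ on the slabs. Every point then leaves $\bigcup_i H_i$ after $O(1/\epsilon)$ returns, so the sets $\{x: h^{mj}(x)\in H_{i_j},\ 0\le j<k\}$ are empty for long words. Replace the identity factor by a contraction.

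A minor point: in Step~1 the ball must contain $x$ and avoid $f(x),\dots,f^{m-1}(x)$, so take $m$ to be a first return time.
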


The Theorem above is accomplished by exhibiting a residual set consisting of homeomorphisms with infinitely many pseudo-horseshoes with an arbitrarily large number of legs. These homeomorphisms can be regarded as $C^{0}-$perturbations of homeomorphisms with possibly finite topological entropy. For instance, a typical homeomorphism in $\mathfrak{R}$ can be regarded as a $C^{0}-$perturbation of a homeomorphism in small balls such that, in each ball, we insert a pseudo-horseshoe, and outside these balls, the homeomorphism remains unchanged.

\section{Pseudo-singular Reparametrizations of Continuous Flows}

\noindent

We now describe the framework needed to treat singular flows arising from continuous flows. The main reference is \cite{Totoki1966}. Let $M$ be a compact metric space and let $(\phi^{t})_{t}:M\to M$ a continuous flow. Let $\alpha:M\to \R_{\geq 0}$ be a continuous function whose zero set $S := \alpha^{-1}(0) \neq \varnothing$ is compact and countable.

Define $$\theta(t,x) = \int_{0}^{t}\alpha(\phi^{s}(x))^{-1}ds,$$ for $t \in \R$, and $x \in M$ and define $$\tau(t,x) = \sup\{s ; \theta(s,x) \leq t\}.$$ It is known that $\theta$ is additive with respect to $\phi,$ just as $\tau$ (see \cite{Totoki1966}).

\begin{definition}(Pseudo-singular Flows)
    Define $\psi^{t}(x) := \phi^{\tau(t,x)}(x),$ $x \in M, t \in \R.$ The flow $\psi = (\psi^{t})_{t}$ is called the pseudo-singular suspension flow of $f$ with roof $r$ and brake $\alpha.$
\end{definition}

\begin{remark}

We use the word "pseudo-singular" because the lack of higher regularity may allow the reparametrized flow to be nonsingular. For example, if we consider the ODE $x' = x^{1\backslash3}$ with initial condition $x(0) = 0,$ it admits two solutions: $x(t) \equiv 0$ for all $t \in \R$ and $x(t) = (\frac{2}{3}t)^{2\backslash3}.$ The last solution is the solution given by the reparametrization above of the ODE $x' = 1$ and therefore by our definition the new flow has no singularities. On the other hand, $x' = x$ has a singularity at $0.$ This happens because $x \to x^{1/3}$ is merely a continuous function, while $x \to x$ is Lipschitz.
\end{remark}

\begin{remark}
    A sufficient condition to $\alpha^{-1}(0) = Sing(\psi)$ is the following: $\int_{0}^{t_{0}}\alpha(\phi^{s}(x))^{-1}ds = \infty,$ for every $x \in M$ and $t_{0} \in \R$ such that $\phi^{t_{0}}(x)\in S.$ 
\end{remark}

We have $Sing(\psi) \subset S.$ Since $\alpha$ is continuous, it has a maximum. Then $\frac{1}{\alpha}$ has a minimum greater than zero. Therefore, $\lim\limits_{t\rightarrow \infty} \theta(t,x) = \infty$ and $\lim\limits_{t\rightarrow -\infty} \theta(t,x) = -\infty,$ for all $x \in M.$ 

Given $x \in M\backslash\bigcup\limits_{t\in\R}\phi^{t}(S),$ we have that $\theta(\cdot,x)$ is an increasing homeomorphism from $\R$ to $\R$ fixing zero. Therefore, $\mathcal{O}_{\phi}(x) = \mathcal{O}_{\psi}(x).$ 

On the other hand, given $x \in \bigcup\limits_{t\in\R}\phi^{t}(Sing(\psi)),$ suppose $t_{0}$ is the real number closest to $0$ such that $\phi^{t_{0}}(x) \in S.$ It is clear that 

$$\lim\limits_{t\rightarrow \infty}\psi^{t}(x) = \phi^{t_{0}}(x).$$

In other words, the orbit of $x$ accumulates on the set of singularities. We now turn our attention to suspension flows. For instance, let $f:M \to M$ be a homeomorphism. Let $r:M \to \R_{>0}$ be a continuous function and let $(\phi^{t})_{t}:\overline{M}\to \overline{M}$ be the suspension flow of $f$ with roof function $r.$ Let $\alpha:\overline{M}\to \R_{\geq 0}$ be a continuous function.

\begin{definition}[Pseudo-singular suspension flows]
    Let $\psi$ be the pseudo-singular flow obtained from $\phi$ with brake $\alpha.$ We call $\psi$ the pseudo-singular suspension flow of $f$ with roof function $r$ and brake $\alpha.$
\end{definition}

Pseudo-singular suspension flows look like suspension flows because they are reparametrization of such flows and because they have a "roof function".

\begin{definition}[The function $\gamma$]
    Define $\gamma:M \to \R_{\geq 0}\cup\{+\infty\}$ as $\gamma(x) = \infty$ when $x \in \pi(Sing(\psi)),$ and otherwise $\psi^{\gamma(x)}(0,x) = \phi^{r(x)}(x) = (0,f(x)).$ 
\end{definition}

The function $\gamma$ serves as the new roof floor for the homeomorphism $f$ corresponding to the reparametrized flow $\psi,$ with a new set of points at infinity corresponding to the points of $\{0\}\times M$ that never return to $\{0\} \times M.$ This means that $\gamma$ is well defined only in $M \backslash \pi(Sing(S)).$ From the defining equation of $\gamma,$

$$\psi^{\gamma(x)}(x) = \phi^{\tau(\gamma(x),x)}(x) = \phi^{r(x)}(x).$$

Since both are the first return time for $x$ then we must have $r(x) = \tau(\gamma(x),x).$ Take $x \in M \backslash \pi(S).$ Observe that $t \rightarrow \theta(t,x)$ is an increasing homeomorphism and this implies that $\tau(\cdot,x)$ is the inverse function of $\theta(\cdot,x).$ Then $\gamma(x) = \theta(r(x),x).$ Therefore,

\begin{equation} \label{eq1}
    \gamma(x) = \int_{0}^{r(x)}\alpha(t,x)^{-1}dt,
\end{equation}

for all $x \in M \backslash \pi(Sing(\psi)).$

\begin{proposition}
    The function $\gamma:M \backslash \pi(Sing(\psi)) \to \R$ is continuous.
\end{proposition}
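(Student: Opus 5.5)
The plan is to work directly from formula \eqref{eq1}. After normalizing the time variable, continuity of $\gamma$ becomes continuity of a parameter-dependent integral. Put
$$\gamma(x)=\int_0^{1} r(x)\,\alpha\bigl(u\,r(x),x\bigr)^{-1}\,du=:\int_0^1 g(u,x)\,du ,$$
for $x\in M\backslash\pi(Sing(\psi))$. The map $(u,x)\mapsto (u\,r(x),x)\in\overline{M}$ is continuous and $r$ is continuous and positive. So $g$ is a continuous function with values in $(0,+\infty]$ on $[0,1]\times M$. It is finite exactly off the compact set $K=\{(u,x): (u\,r(x),x)\in S\}$.

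Fix $x_0\in M\backslash\pi(Sing(\psi))$ and a sequence $x_k\to x_0$ with $x_k\notin\pi(Sing(\psi))$. I would prove continuity by splitting into lower and upper semicontinuity.

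\textbf{Lower semicontinuity.} The functions $g(\cdot,x_k)$ are nonnegative and converge pointwise to $g(\cdot,x_0)$ as extended real numbers, by continuity of $\alpha$ and of $r$. Fatou's lemma then gives $\gamma(x_0)\le\liminf_k\gamma(x_k)$. In particular, $\gamma(x_k)$ cannot stay bounded if $\gamma(x_0)=\infty$, which is consistent with $\gamma=\infty$ on $\pi(Sing(\psi))$.

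\textbf{Upper semicontinuity, easy case.} Suppose first that the segment $\{(t,x_0):t\in[0,r(x_0)]\}$ misses $S$. By compactness, $\alpha\ge c>0$ on a neighbourhood of this segment in $\overline{M}$. Hence $g$ is bounded and uniformly continuous on $[0,1]\times U$ for a small neighbourhood $U$ of $x_0$. Dominated convergence, or simply uniform convergence, then gives $\gamma(x_k)\to\gamma(x_0)$.

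\textbf{Upper semicontinuity, general case.} The general case is the main obstacle. Here the segment of $x_0$ meets $S$ in a compact countable set $C_0\subset[0,1]$, and still $\int_0^1 g(u,x_0)\,du<\infty$ because $x_0\notin\pi(Sing(\psi))$. Take an open set $W\supset C_0$ in $[0,1]$. On $[0,1]\backslash W$ the easy-case argument applies, since $K$ is closed and so $\alpha$ is bounded below near $([0,1]\backslash W)\times\{x_0\}$. It therefore remains to prove a uniform integrability estimate: for every $\varepsilon>0$ there are an open $W\supset C_0$ and a neighbourhood $U$ of $x_0$ such that
$$\int_W g(u,y)\,du<\varepsilon \quad\text{for all } y\in U\backslash\pi(Sing(\psi)).$$

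To obtain this, I would first use compactness and countability of $C_0$ together with the finiteness of $\int_0^1 g(u,x_0)\,du$. This lets me cover $C_0$ by finitely many short intervals $W_i$ whose total $x_0$-integral is below $\varepsilon/2$. I would then compare $\int_{W_i}g(u,y)\,du$ with $\int_{W_i}g(u,x_0)\,du$ by exploiting the product structure of the suspension: the nearby segment of $y$ passes close to the same points of $S$. I would try to control $\alpha(\cdot,y)$ from below by $\alpha(\cdot,x_0)$, up to a small error, along each $W_i$. If this comparison cannot be achieved from continuity alone, this is precisely the point where an additional regularity assumption on $\alpha$ near $S$ would be invoked.

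Once the estimate holds, combining it with the uniform convergence on $[0,1]\backslash W$ yields $\limsup_k\gamma(x_k)\le\gamma(x_0)+\varepsilon$. Together with lower semicontinuity this gives continuity of $\gamma$ at $x_0$.
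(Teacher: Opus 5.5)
Your Fatou step and your easy case are correct, and the easy case is exactly what the paper's proof establishes. The paper bounds $|\alpha(t,x)^{-1}-\alpha(t,x_0)^{-1}|$ by $\varepsilon'/(m_0^2-\varepsilon' M_0)$, where $m_0$ is the infimum of $\alpha$ along the segment, and it handles the moving endpoint with $M_1=\sup\alpha(t,x_0)^{-1}$ near $t=r(x_0)$. Both steps need $\alpha$ bounded away from $0$ along the segment of $x_0$, so the paper silently treats only points whose segment misses $S$. Your rescaling $t=u\,r(x)$ with uniform continuity on $[0,1]\times U$ is a tidier version of the same argument, and your lower semicontinuity is an extra the paper does not state. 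The genuine gap is your general case, and it cannot be closed as you plan. The uniform estimate $\int_W g(u,y)\,du<\varepsilon$ does not follow from continuity of $\alpha$, and under the stated hypotheses the proposition itself fails at points of $\pi(S)\backslash\pi(Sing(\psi))$.

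Here is a counterexample. Write points near $p=[t_p,x_0]$, with $0<t_p<r(x_0)$, as $[t_p+t,y]$, and put $s=d(y,x_0)$. Define
\[
\alpha(t,0)=\sqrt{|t|},\qquad \alpha(t,s)=\min\Bigl\{\sqrt{|t|}+s,\; e^{-1/s^{2}}+\frac{|t-s|}{s}\Bigr\}\quad (s>0),
\]
and extend it to a continuous function that is positive away from $p$. This $\alpha$ is continuous with $\alpha^{-1}(0)=\{p\}$. Since $|t|^{-1/2}$ is integrable, $p\notin Sing(\psi)$, and $\gamma(x_0)<\infty$. But $\alpha$ is at most the second term, so the window $|t-s|\le s^{2}$ alone contributes
\[
\int_{|u|\le s^{2}}\frac{du}{e^{-1/s^{2}}+|u|/s}=2s\log\bigl(1+s\,e^{1/s^{2}}\bigr)\sim\frac{2}{s},
\]
so $\gamma(y)\to\infty$ as $y\to x_0$; here $\psi$ is not even a continuous flow.

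So the extra hypothesis in your last sentence is not a fallback but a necessity. One option is to restrict the statement to the set of $x$ whose segment $\{[t,x]:0\le t\le r(x)\}$ misses $S$, which is open since $S$ is compact. Another is to assume that every point of $S$ is a genuine singularity, e.g.\ via the divergence condition in the paper's Remark, so that $\gamma<\infty$ exactly on that set. In either case your easy case plus the Fatou step is the whole proof. Alternatively, assume $\psi$ is a continuous flow, as the paper's definition of flow and its use of $h_{top}(\psi)$ presuppose. Then the general case should be derived from that continuity, since $\gamma$ is the hitting time of $\{0\}\times M$ and $\psi$-orbits cross it monotonically. It should not be derived from an integrability estimate on $1/\alpha$.
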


\begin{proof}
    Let $x,x_{0}$ be points in $M \backslash \pi(Sing(\psi)),$ $x_{0}$ fixed. We are going to show that $\gamma$ is continuous at $x_{0}.$ Observe that 

    $$\alpha(t,x)^{-1}-\alpha(t,x_{0})^{-1} = -\dfrac{\alpha(t,x)-\alpha(t,x_{0})}{\alpha(t,x)\alpha(t,x_{0})}.$$

    By the formula (\ref{eq1}), we have

\begin{equation*}
    \begin{split}
        |\gamma(x)-\gamma(x_{0})| & = \left| \int_{0}^{r(x)}\alpha(t,x)^{-1}dt - \int_{0}^{r(x_{0})}\alpha(t,x_{0})^{-1}dt\right|\\
        &\leq \left| \int_{0}^{r(x_{0})}(\alpha(t,x)^{-1} - \alpha(t,x_{0})^{-1}) dt\right| + \left|\int_{r(x_{0})}^{r(x)}\alpha(t,x)^{-1}dt. \right|
    \end{split}
\end{equation*}

Let $\varepsilon' > 0$ be an arbitrary number. The continuity of the flow implies that the map $\phi:[0,r(x_{0})]\times \tilde{M} \to \tilde{M}$ is continuous. This implies that $\alpha \circ \phi$ is continuous and hence uniformly continuous. From the uniform continuity of $\alpha$ take $\delta_{1} > 0$ such that $|\alpha(t,x) - \alpha(t,x_{0})| = |\alpha(\phi^{t}(x))-\alpha(\phi^{t}(x_{0}))| < \varepsilon'$ whenever $d(x,x_{0}) < \delta_{1}.$ Define $M_{0} = \sup\limits_{0 \leq t \leq r(x_{0})} \alpha(t,x)$ and $m_{0} = \inf\limits_{0\leq t \leq r(x_{0})}\alpha(t,x)$. For sufficiently small $\varepsilon':$

\begin{equation*}
    \begin{split}
        |\alpha(t,x)^{-1}-\alpha(t,x_{0})^{-1}| & = \left|\dfrac{\alpha(t,x)-\alpha(t,x_{0})}{\alpha(t,x)\alpha(t,x_{0})}\right|\\
        & = \left|\dfrac{\alpha(t,x)-\alpha(t,x_{0})}{(\alpha(t,x)-\alpha(t,x_{0}))\alpha(t,x_{0}) + \alpha(t,x_{0})^{2}}\right|\\
        & \leq \dfrac{\varepsilon'}{m_{0}^{2} - \varepsilon' M_{0}}.
    \end{split}
\end{equation*}

Therefore,

\begin{equation*}
    \begin{split}
        \left| \int_{0}^{r(x_{0})}(\alpha(t,x)^{-1} - \alpha(t,x_{0})^{-1}) dt\right| \leq \dfrac{\varepsilon \cdot r(x_{0})}{m_{0}^{2} - \varepsilon M_{0}}.
    \end{split}
\end{equation*}

Now we turn our attention to the second term.  We can take $\delta_{2}$ small enough so that $d(x,{x_{0}}) < \delta_{2}$ implies $|r(x)-r(x_{0})| < \varepsilon'.$
Also, fix $\varepsilon_{0} >0$ and $\delta_{0} >0.$ The function $(t,x) \in [r(x_{0})-\varepsilon_{0},r(x_{0})+\varepsilon_{0}] \times B_{\delta_{0}}(x) \to \alpha(\phi^{t}(0,x))^{-1}$ is (uniformly) continuous. Then, for sufficiently small $\varepsilon',$ there exists $\delta_{3} > 0$ such that $d(x,x_{0})< \delta_{3}$ implies

$$|\alpha(t,x)^{-1} - \alpha(t,x_{0})^{-1}| < \varepsilon'.$$

Put $M_{1} =  \sup\limits_{t \in [r(x_{0})-\varepsilon_{0},r(x_{0})+\varepsilon_{0}]}\alpha(t,x_{0})^{-1}.$ Then,

\begin{equation}
    \begin{split}
        \left|\int_{r(x_{0})}^{r(x)}\alpha(t,x)^{-1}dt\right| & \leq \left|\int_{r(x_{0})}^{r(x)}(\alpha(t,x)^{-1} - \alpha(t,x_{0})^{-1} )dt\right| + \left|\int_{r(x_{0})}^{r(x)}\alpha(t,x_{0})^{-1}dt\right|\\
        &  \leq \varepsilon' |r(x)-r(x_{0})| + M_{1}|r(x)-r(x_{0})| \\
        & \leq (\varepsilon')^2 + M_{1}\varepsilon'.
    \end{split}
\end{equation}

Therefore, 

$$|\gamma(x)-\gamma(x_{0})| \leq \dfrac{\varepsilon'}{m_{0}^{2} - \varepsilon' M_{0}} + (\varepsilon')^2 + M_{1}\varepsilon'.$$

The function $\varepsilon' \to \dfrac{\varepsilon'}{m_{0}^{2} - \varepsilon' M_{0}} + (\varepsilon')^2 + M_{1}\varepsilon'$ is continuous at $0.$ Then, given $\varepsilon> 0,$ choose $\varepsilon'$ small enough and $\delta = \min\{\delta_{1},\delta_{2},\delta_{3}\}$ accordingly such that $\dfrac{\varepsilon'}{m_{0}^{2} - \varepsilon' M_{0}} + (\varepsilon')^2 + M_{1}\varepsilon' < \varepsilon.$ Hence, $\gamma:M \backslash \pi(Sing(\psi))$ is continuous at $x_{0}.$ By the arbitrary choice of $x_{0},$ the proof is complete.
\end{proof}

Define

\begin{equation*}
A_{sing} = \pi\left(\bigcup\limits_{t \in \R}\phi^{t}(Sing(\psi))\right).   
\end{equation*}

$A_{sing}$ is countable. Indeed, by the definition of a suspension flow, $A_{sing}$ is the set of points of $M$ such that some vertical line $[0,r(x)] \times \{x\}$ contains a singularity. That is, it is the union of orbits of $f$ such that some vertical line based on an element of this orbit contains a singularity. Since the orbits are countable, so is $A_{sing}.$ 

In what follows, we present some useful lemmas that were stated in \cite{WTY2009} and \cite{RomanaRego2025} in the case of smooth flows. Here, they are stated and proved in general metric spaces. We write $E_{\mu}(f) := \int f d\mu$ to be the \textit{expected value of $f$ with respect to} $\mu.$
\begin{definition}

Let $\overline{\mu}$ be an ergodic measure of the flow $\psi.$ We say that $a \in \overline{M}$ is \textit{generic} for $\overline{\mu}$ under the action of $\psi$ when, for all continuous functions $f:\overline{M}\to \R$, we have 

$$\lim\limits_{t\to \infty}\dfrac{1}{2t}\int_{-t}^{t}f(\psi^{s}(a))ds = \int fd\overline{\mu}.$$

\end{definition}

Observe that if $a \in \overline{M}$ is \textit{generic} for $\overline{\mu}$, then $\psi^{t}(a)$ is generic for $\overline{\mu}$ for all $t \in \R.$ Moreover, by the Birkhoff Ergodic Theorem, we know that the set of generic points contains a set of full measure.

\begin{lemma}\label{lemmameas}

    For every ergodic and non-atomic measure $\overline{\mu}$ for $\psi$, there exists a unique ergodic and non-atomic measure $\mu$ such that

    $$E_{\overline{\mu}}(\varphi) = \dfrac{1}{E_{\mu}(\gamma)}E_{\mu}\left(\int_{0}^{\gamma(x)}\varphi(\psi^{t}(0,x))dt\right)$$
    for every continuous function $\varphi:\overline{M}\to\R.$
\end{lemma}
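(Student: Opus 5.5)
The plan is to construct $\mu$ by inducing on the cross-section $\Sigma := \{0\}\times M \subset \overline{M}$, which we identify with $M$ via $\pi$. The first-return map of $\psi$ to $\Sigma$ is $f$ itself, defined away from $A_{sing}$, and the first-return time is $\gamma$. The only points of $\Sigma$ that fail to return are those in $A_{sing}$, which is countable.

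First we show that $\overline{\mu}$ gives zero mass to the set $\overline{M}\setminus\bigcup_{t}\psi^t(\Sigma\setminus A_{sing})$ of points whose $\psi$-orbits never cross $\Sigma$ transversally. The points of this set lie on the countably many $\phi$-orbits through $Sing(\psi)$ (together with $Sing(\psi)$ itself).

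1. Since $\overline{\mu}$ is non-atomic, $\overline{\mu}(Sing(\psi))=0$.
2. Each remaining orbit piece is a single $\psi$-orbit. An ergodic non-atomic measure charging a single orbit $\mathcal{O}$ would be carried by $\mathcal{O}$ by ergodicity. By invariance it would then be a finite multiple of Lebesgue measure in the time parameter, which is impossible for a non-closed orbit that is not periodic. A closed orbit of $\psi$ through a singular line is not possible, since the orbit accumulates on the singularity.

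Hence $\overline{\mu}$ is carried by the set of points $\psi^s(0,x)$ with $x\notin A_{sing}$ and $0\le s<\gamma(x)$. On this set, $(s,x)\mapsto \psi^s(0,x)$ is a measurable bijection onto a full-measure set. This is the key local product structure, "a flow box over $\Sigma$ with height $\gamma$".

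Next we disintegrate. For a Borel set $B\subset M\setminus A_{sing}$ and small $\varepsilon>0$, set
$$\nu(B) := \varepsilon^{-1}\,\overline{\mu}\bigl(\{\psi^s(0,x): x\in B,\ 0\le s<\min(\varepsilon,\gamma(x))\}\bigr).$$
Using $\psi$-invariance, as in the standard Ambrose--Kakutani argument, we show that this quantity is independent of $\varepsilon$ (pass to the limit $\varepsilon\to 0$ if necessary) and defines a finite $f$-invariant measure $\nu$ on $M$. Invariance of $\overline{\mu}$ under $\psi$ gives, for all integrable $\varphi$,
$$E_{\overline{\mu}}(\varphi)=\int_M\int_0^{\gamma(x)}\varphi(\psi^t(0,x))\,dt\,d\nu(x).$$
Taking $\varphi\equiv1$ gives $\int\gamma\,d\nu=1$, so $\nu$ is finite (nonzero because $\overline{\mu}$ is a probability) and $\gamma\in L^1(\nu)$. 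We set $\mu=\nu/\nu(M)$; then $E_\mu(\gamma)=1/\nu(M)$, which yields exactly the displayed formula. The measure $\mu$ is non-atomic: an atom at $x$ would force $\overline{\mu}$ to charge the single orbit of $(0,x)$, which is excluded as above. Ergodicity of $\mu$ follows because any $f$-invariant set $B$ yields the $\psi$-invariant saturated set $\{\psi^s(0,x):x\in B\}$, whose $\overline{\mu}$-measure is $\int_B\gamma\,d\nu$. This must be $0$ or $1$, forcing $\mu(B)\in\{0,1\}$ since $\gamma>0$.

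For uniqueness, suppose $\mu'$ also satisfies the formula. Testing it against functions $\varphi$ supported in thin flow boxes $\{\psi^s(0,x):x\in U,\ s\in[0,\varepsilon)\}$, approximated by continuous functions, recovers $\mu'(U)/E_{\mu'}(\gamma)$ as a limit. Thus $\mu'$ is proportional to $\nu$, and both are probabilities, so $\mu'=\mu$.

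The main obstacle is making the flow-box disintegration rigorous in the merely continuous, pseudo-singular setting. Here $\gamma$ may be unbounded near $A_{sing}$, and it is continuous only on $M\setminus\pi(Sing(\psi))$ by the preceding Proposition. I would handle this by restricting to the sets $\{\gamma\le N\}\setminus A_{sing}$ and using monotone convergence, with the continuity of $\gamma$ there ensuring measurability.
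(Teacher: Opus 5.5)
Your route is genuinely different from the paper's. The paper constructs $\mu$ dynamically. It takes a $\overline{\mu}$-generic point $(0,a)$ and lets $\mu$ be a weak$^*$ accumulation point of $\frac{1}{2n}\sum_{k=-n}^{n}\delta_{f^k(a)}$. It then writes the time averages of $\psi$ as ratios of Birkhoff sums of $\int_0^{\gamma}\varphi\circ\psi^t$ and of $\gamma$ along the base orbit. Cutoffs $\varphi^m$ vanishing near $\pi(S)$, together with monotone and dominated convergence, give $E_\mu(\gamma)<\infty$ and the formula. Uniqueness and ergodicity are read off from the cylinders $\pi^{-1}(A)$.

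You instead induce on $\{0\}\times M$ in the Ambrose--Kakutani way. Here $\nu$ is the transverse measure of thin flow boxes, the formula is the product structure $\nu\times\text{Leb}$ of $\overline{\mu}$ on the tower, and $\int\gamma\,d\nu=\overline{\mu}(\overline{M})=1$ gives $E_\mu(\gamma)<\infty$ for free. So the unboundedness of $\gamma$ never needs cutoffs.

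Your version is more canonical and avoids points the paper's argument must handle. These include genericity of $a$ for a measure that is only an accumulation point, and the fact that $\overline{\varphi}^m(t,x)=\varphi^m(x)$ is not continuous across $(r(x),x)\sim(0,f(x))$. In exchange, the paper's version (following Ohno and Sun et al.) describes $\mu$ explicitly through orbit averages. The real work left in your route is Borel measurability of the boxes and the $f$-invariance of $\nu$. The latter comes from matching, via $\psi^{\varepsilon}$, the strip of height $\varepsilon$ just below the roof over $B$ with the bottom strip over $f(B)$. Note also that $\gamma\geq \min r/\max\alpha>0$. Hence for $\varepsilon<\min r/\max\alpha$ the boxes are injective and $\varepsilon\mapsto\overline{\mu}(\text{box})$ is additive, hence linear; the $\min(\varepsilon,\gamma(x))$ and the limit in $\varepsilon$ are unnecessary.

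The one step that fails is non-atomicity of $\mu$. Your exclusion argument only covers orbits that are not closed. An atom of the $f$-invariant measure $\mu$ lies on a periodic $f$-orbit, which, off $A_{sing}$, is a \emph{closed} $\psi$-orbit. The normalized time measure on a closed $\psi$-orbit is ergodic and non-atomic, yet for it the formula forces $\mu$ to be the equidistribution on the periodic $f$-orbit, which is atomic. So the statement is false in this case.

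The paper's proof shares this defect: its claim that ``by construction, $\mu$ is non-atomic'' fails when $a$ is $f$-periodic. The correct conclusion is that $\mu$ is non-atomic exactly when $\overline{\mu}$ is not carried by a closed orbit. You should add that hypothesis rather than claim non-atomicity in general; it is harmless for the entropy applications, since closed orbits carry zero entropy. Your uniqueness argument survives if you compare with any $f$-invariant $\mu'$ with $E_{\mu'}(\gamma)<\infty$, since this already forces $\mu'(A_{sing})=0$.
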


\begin{proof}
    We follow the ideas introduced in \cite{Ohno1980}, and later formulated in \cite{WTY2009}. We observe that in both papers, the essential point is that the argument applies to a continuous flow on a compact metric space.

    \begin{itemize}

    \item[(1)] \textit{Existence.} Let $(t,a)$ be a generic point for $\overline{\mu}.$ By the definition of a suspension flow, we may suppose that $t = 0.$ Since this point is generic, $a \notin A_{sing}$, which implies that $\lim\limits_{n\rightarrow \infty}\sum\limits_{k=1}^{n}\gamma(f^{k}(a)) =\infty.$ Again, by the definition of a suspension flow, changing $t$ to $\sum\limits_{k=1}^{n}\gamma(f^{k}(a)),$ we obtain

    $$E_{\overline{\mu}}(\varphi) = \lim\limits_{n\rightarrow \infty} \dfrac{1}{\sum\limits_{k=-n}^{n}\gamma(f^{k}(a))}\int_{\sum\limits_{k=-1}^{-n}\gamma({f^{k}(a))}}^{\sum\limits_{k=0}^{n}\gamma(f^{k}(a))}\varphi(\psi^{s}(0,a))ds=$$

    $$\lim\limits_{n\rightarrow \infty}\dfrac{1}{\sum\limits_{k=-n}^{n}\gamma(f^{k}(a))}\sum\limits_{k=-n}^{n}\int_{0}^{\gamma(f^{k}(a))}\varphi(\psi^{s}(0,f^{k}(a)))ds.$$

    Let $\mu$ be an accumulation point of $\left\{\dfrac{1}{2n}\sum\limits_{k=-n}^{n}\delta_{f^{k}(a)}; n> 0\right\}.$ By construction, $a$ is "generic" for $\mu$ with relation to $f$ and $\mu$ is non-atomic.

    Write $\pi(S) = \{a_{1},a_{2},\dots\}$ and choose $0 < \varepsilon_{n}^{m}<\eta_{n}^{m}$ such that $\lim\limits_{m} \eta_{n}^{m} =0,$ and choose bump functions $\varphi^{m}:M\to \R_{\geq 0}$ such that $0 \leq \varphi^{m} \leq 1,$ $\supp(\varphi^{m}) \subset M \backslash \left( \cup_{n} \overline{B_{\varepsilon_{n}^{m}}(a_{n})} \right)$ and $\varphi^{m} \equiv 1$ in $M \backslash \left( \cup_{n} \overline{B_{\eta_{n}^{m}}(a_{n})} \right),$ in a way that $(\varphi^{m})_{m}$ is increasing. In this setting, we observe that $\varphi^{m}\rightarrow 1$ pointwise in $M.$ We use this sequence of functions to deal with the points at infinity of $\gamma.$

    Define $\overline{\varphi}^{m}:\overline{M} \to \R$ by $\overline{\varphi}^{m}(t,x) = \varphi^{m}(x)$ for all $(t,x) \in \overline{M}.$ We observe that these continuous functions have the property that $\supp({\overline{\varphi^{m}}}) \subset \overline{M} \backslash S$ and $\overline{\varphi}^{m}\rightarrow 1$ pointwise in $\overline{M}.$ Moreover, define $\Phi^{m}:M \to\R$ by

    \begin{equation*}
        \Phi^{m}(x) = \int_{0}^{\gamma(x)}\overline{\varphi}^{m}(\psi^{s}(0,x))ds.
    \end{equation*} 
    
    By the definition of $\overline{\varphi}^{m}$ and by the property of its support, we have $\Phi^{m}(x)= \gamma(x)\varphi^{m}(x),$ which is continuous in $M$ (remember $\varphi^{m}$ is zero near the singularities). Since $\varphi^{m} \rightarrow 1$ pointwise, we must have 
    
    \begin{equation*}
        \int \varphi^{m}(x)\gamma(x)d\mu(x) > 0
    \end{equation*}
    
     for some $m.$ On the contrary, by taking the limit in $m$, we would conclude that $\int \gamma d\mu = 0.$ Hence, $\gamma \equiv 0 \mod \mu,$ and this cannot be true since $\mu$ is non-atomic and $\gamma$ has at most a countable number of zeros and $\gamma(f^{k}(a)) > 0$ for all $k \in \Z$, or if we simply suppose, without loss of generality, that $\gamma > 0,$ that is, if no singularity was inserted inside $M.$ Then, there exists $m_{0}$ such that $\int \Phi^{m_{0}}d\mu >0.$ We get

    $$\int \overline{\varphi}^{m_{0}}d\overline{\mu} = \lim\limits_{n'\rightarrow \infty}\dfrac{1}{\sum\limits_{k=-n'}^{n'}\gamma(f^{k}(a))}\sum\limits_{k=-n'}^{n'}\int_{0}^{\gamma(f^{k}(a))}\overline{\varphi}^{m_{0}}(\psi^{s}(0,f^{k}(a)))ds=$$

    $$=\lim\limits_{n'\rightarrow \infty}\dfrac{1}{\dfrac{1}{2n'}\sum\limits_{k=-n'}^{n'}\gamma(f^{k}(a))}\dfrac{1}{2n'}\sum\limits_{k=-n'}^{n'}\int_{0}^{\gamma(f^{k}(a))}\overline{\varphi}^{m_{0}}(\psi^{s}(0,f^{k}(a)))ds.$$

    Note that 
    
    $$\lim\limits_{n'\rightarrow \infty}\dfrac{1}{2n'}\sum\limits_{k=-n'}^{n'}\int_{0}^{\gamma(f^{k}(a))}\overline{\varphi}^{m_{0}}(\psi^{s}(0,f^{k}(a)))ds =$$
    
    $$\lim\limits_{n'\rightarrow \infty} \dfrac{1}{2n'}\sum\limits_{k=-n'}^{n'}\Phi^{m_{0}}(f^{k}(a)) = \int \Phi^{m_{0}}d\mu > 0,$$
    
    and this implies that the limit

    $$\lim\limits_{n'\rightarrow \infty}\dfrac{1}{2n'}\sum\limits_{k=-n'}^{n'}\gamma(f^{k}(a))$$

    exists. Call this limit $c.$ By the Monotone Convergence Theorem,

    $$1 = \lim\limits_{m\rightarrow \infty} E_{\overline{\mu}}(\overline{\varphi}^{m}) = \dfrac{1}{c}\lim\limits_{m \to \infty}E_{\mu}(\varphi^{m}\gamma) = \dfrac{1}{c}E_{\mu}(\gamma),$$

    that is, $E_{\mu}(\gamma) = c$ is finite.

   Let $\varphi$ be any continuous function in $\overline{M}.$ By the Dominated Convergence Theorem:

    \begin{equation*}
        \begin{split}
            E_{\overline{\mu}}(\varphi) =\lim\limits_{m \rightarrow \infty} E_{\overline{\mu}}(\varphi\cdot \overline{\varphi}^{m}) &=\lim\limits_{m \rightarrow \infty} \dfrac{1}{E_{\mu}(\gamma)}E_{\mu}\left(\int_{0}^{\gamma(x)}(\varphi\cdot \overline{\varphi}^{m})(\psi^{t}(0,x))dt\right)\\
            & = \dfrac{1}{E_{\mu}(\gamma)}E_{\mu}\left(\int_{0}^{\gamma(x)}\varphi(\psi^{t}(0,x))dt\right),
        \end{split}
    \end{equation*}

   and this shows the equality required.


    \item[(2)] \textit{$\mu$ is unique.} Indeed, given $A \subset M$ measurable, let $A_{0} = \{(t,x) \in \overline{M}; x \in A\}.$ Then, $\overline{\mu}(A_{0}) = \dfrac{1}{E_{\mu}(\gamma)}E_{\mu}\left(\int_{0}^{\gamma(x)}\mathds{1}_{A_{0}}(\psi^{s}(0,x))ds\right) = \dfrac{1}{E_{\mu}(\gamma)} E_{\mu}\left(\mathds{1}_{A}\gamma\right) = \dfrac{1}{E_{\mu}(\gamma)} \int_{A}\gamma d\mu,$ that is, given another measure $\mu'$ which satisfies the hypothesis of the lemma, we have that $\gamma d\mu = \gamma d\mu'.$ Since the measure $\mu$ does not possess atoms, excluding the points where $\gamma$ is infinity (which is a countable set and hence has zero measure) we have $d\mu = d\mu',$ that is, $\mu = \mu'.$ 
    
    \item[(3)] \textit{$\mu$ is ergodic.} To verify that $\mu$ is ergodic, let $A \subset M$ be $f-$invariant. Then, $A_{0}$ is measurable and $\psi-$invariant, and then $A_{0}$ has measure $0$ or $1$ with respect to $\overline{\mu}.$ Since $\overline{\mu}(A_{0}) = \dfrac{1}{E_{\mu}(\gamma)}\int_{A}\gamma d\mu$ and $\gamma^{-1}(0)$ has zero measure, $A$ must have measure $0$ or $1.$
    
    \end{itemize}
\end{proof}

\begin{corollary}
    If $E_{\mu}(\gamma) = \infty$ for all ergodic and non-atomic measures $\mu$ for $f$, then $\psi$ has only atomic measures.
\end{corollary}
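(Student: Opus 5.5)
The corollary should follow from Lemma \ref{lemmameas} by contradiction. Suppose $\psi$ has an invariant probability measure that is not purely atomic. The goal is to produce an ergodic non-atomic measure for $\psi$. Then Lemma \ref{lemmameas} gives an ergodic non-atomic $f$-invariant measure $\mu$ with $E_{\mu}(\gamma)<\infty$. This contradicts the hypothesis.

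\textbf{Step 1: reduce to the ergodic case.} Write any $\psi$-invariant probability $\nu$ via its ergodic decomposition, $\nu=\int \overline{\mu}_{\xi}\,d\nu(\xi)$. An ergodic invariant measure of a flow that has an atom at a point $p$ must be supported on the orbit of $p$. If that orbit is not a singularity, it carries no finite invariant measure with an atom: the atoms $\psi^{t}(p)$ would all have equal mass, and for a periodic orbit there are uncountably many of them. So the ergodic measure is a Dirac mass at a point of $Sing(\psi)$. Hence every ergodic $\psi$-measure is either non-atomic or a Dirac mass at a singularity. Consequently, if all ergodic components of every invariant $\nu$ are Dirac masses at singularities, then $\nu$ is supported on the countable set $Sing(\psi)\subset S$ and is purely atomic. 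If instead some invariant measure is not purely atomic, then some ergodic component $\overline{\mu}$ is non-atomic.

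\textbf{Step 2: apply Lemma \ref{lemmameas}.} For this non-atomic ergodic $\overline{\mu}$, Lemma \ref{lemmameas} produces an ergodic non-atomic $f$-invariant $\mu$ satisfying the stated formula. In its existence part the lemma shows $E_{\mu}(\gamma)=c<\infty$, and indeed the formula itself requires $0<E_{\mu}(\gamma)<\infty$. This contradicts the assumption that $E_{\mu}(\gamma)=\infty$ for every ergodic non-atomic $\mu$. Therefore all $\psi$-invariant measures, and in particular all ergodic ones, are atomic, and in fact supported on $Sing(\psi)$.

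\textbf{Main obstacle.} The only delicate point is Step 1: showing that an ergodic measure with an atom must be a Dirac mass at a singularity, and handling the possibility of atoms on non-singular points. I would argue this by invariance. If $\overline{\mu}(\{p\})=a>0$ and $p$ is not fixed, then $\overline{\mu}(\{\psi^{t}(p)\})=a$ for all $t$. The points $\psi^{t}(p)$ are infinitely many and distinct for $t$ in a small interval, since a non-fixed point has an orbit map that is non-constant, and a periodic orbit is a circle. This forces infinite total mass, which is impossible. If the statement is read only for ergodic measures, Step 1 reduces to exactly this observation.
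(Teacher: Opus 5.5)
Your proposal is correct and follows the paper's argument. You assume a non-atomic ergodic $\overline{\mu}$ for $\psi$, apply Lemma \ref{lemmameas} (whose existence part gives $E_{\mu}(\gamma)=c<\infty$), and contradict the hypothesis. Your Step 1 (atoms of an invariant measure can only lie at singularities, together with the ergodic decomposition and the countability of $S$) makes explicit the reduction from arbitrary invariant measures to ergodic ones, which the paper takes for granted when it writes ``on the contrary, there would exist a non-atomic and ergodic measure.''
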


\begin{proof}
    This corollary was obtained in \cite{WTY2009}. Observe that it also follows from the proof of the lemma above, producing another proof of this corollary. In fact, on the contrary, there would exist a non-atomic and ergodic measure $\overline{\mu}$ for $\psi.$ By the proof of the lemma, there exists a non-atomic and ergodic measure $\mu$ for $f$ such that $E_{\mu}(\gamma) < \infty$, and this contradicts the hypothesis.
\end{proof}

The following Theorem is a continuous version of Theorem $3.2$ in \cite{RomanaRego2025}.

\begin{theorem} 
    Let $f:M\to M$ be a homeomorphism in a compact metric space with positive entropy and $\psi$ a pseudo-singular suspension flow with roof $r$ and brake $\alpha.$ Then $h_{top}(\psi) = 0$ if and only if every ergodic measure $\mu$ of $f$ satisfying $h_{\mu}(f) > 0,$ we have $E_{\mu}(\gamma) = \infty.$
\end{theorem}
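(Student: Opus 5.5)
Both directions go through the Variational Principle for the flow $\psi$, that is for the time-one map $\psi^{1}$, combined with the correspondence of Lemma \ref{lemmameas} between ergodic non-atomic measures $\overline{\mu}$ of $\psi$ and ergodic non-atomic measures $\mu$ of $f$ with $E_{\mu}(\gamma)<\infty$. The quantitative link we need is an Abramov-type formula for this correspondence:
\begin{equation*}
h_{\overline{\mu}}(\psi)=\dfrac{h_{\mu}(f)}{E_{\mu}(\gamma)}.
\end{equation*}
It should follow because, off the countable set $A_{sing}$ (which is $\mu$-null since $\mu$ is non-atomic), $\psi$ restricted to the complement of the singular orbits is measurably isomorphic to the classical suspension of $(f,\mu)$ with roof function $\gamma$. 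Indeed, $\gamma$ is measurable and finite $\mu$-a.e., and by the Proposition it is continuous where finite. The representation formula of Lemma \ref{lemmameas} says exactly that $\overline{\mu}$ is the normalized suspension measure $\frac{1}{E_\mu(\gamma)}(\mu\times dt)$ under this isomorphism. Abramov's formula for measurable suspensions with integrable roof then applies directly.

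For the direction ``$\Leftarrow$'', assume every ergodic $\mu$ with $h_\mu(f)>0$ has $E_\mu(\gamma)=\infty$. Take any ergodic measure $\overline{\mu}$ of $\psi$. If $\overline{\mu}$ is atomic, then it is supported on a fixed point or on a periodic orbit, so $h_{\overline{\mu}}(\psi)=0$. If it is non-atomic, Lemma \ref{lemmameas} produces an ergodic non-atomic $\mu$ with $E_\mu(\gamma)<\infty$. By hypothesis this forces $h_\mu(f)=0$, and the formula gives $h_{\overline{\mu}}(\psi)=0$. Ergodic decomposition together with the Variational Principle then yields $h_{top}(\psi)=0$.

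For ``$\Rightarrow$'', argue by contraposition. Suppose some ergodic $\mu$ has $h_\mu(f)>0$ and $E_\mu(\gamma)<\infty$. Positive entropy makes $\mu$ non-atomic, so $\mu(A_{sing})=0$. Define $\overline{\mu}$ by the formula of Lemma \ref{lemmameas}, pushing $\frac{1}{E_\mu(\gamma)}\mu\times dt$ forward under $(x,t)\mapsto\psi^{t}(0,x)$ for $0\le t<\gamma(x)$. We then check three things. First, $\overline{\mu}$ is a Borel probability measure, which uses the finiteness of $E_\mu(\gamma)$. Second, it is $\psi$-invariant, by the standard suspension computation using $\psi^{\gamma(x)}(0,x)=(0,f(x))$. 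Third, it is ergodic, because every $\psi$-invariant set is a union of vertical fibres over an $f$-invariant set. The Abramov formula then gives $h_{\overline{\mu}}(\psi)=h_\mu(f)/E_\mu(\gamma)>0$, and so $h_{top}(\psi)>0$.

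The main obstacle is justifying Abramov's formula in this setting. The difficulty is that $\gamma$ is unbounded near $\pi(Sing(\psi))$ and the phase space is not a genuine suspension space, so the measurable isomorphism must be set up carefully on full-measure sets. My first attempt would be to quote Abramov's theorem for suspensions with integrable roofs over an abstract Lebesgue space, after verifying that the map $(x,t)\mapsto\psi^{t}(0,x)$ is a bimeasurable bijection from $\{(x,t):x\notin A_{sing},\,0\le t<\gamma(x)\}$ onto its image, and that the image has full $\overline{\mu}$-measure. If this is delicate, a fallback suffices for the theorem: we only need the lower bound $h_{\overline{\mu}}(\psi)>0$ for ``$\Rightarrow$'' and the vanishing $h_\mu(f)=0\Rightarrow h_{\overline{\mu}}(\psi)=0$ for ``$\Leftarrow$''. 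Both of these follow from Abramov's relation between the entropy of a flow and that of its induced first-return map to the cross-section $\{0\}\times M$, which is $(f,\mu)$.
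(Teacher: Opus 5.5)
Your proof is correct, but it is organized differently from the paper's.

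\textbf{Your route.} You identify $(\psi,\overline{\mu})$, off the $\overline{\mu}$-null set $\pi^{-1}(A_{sing})$, with the measurable special flow over $(f,\mu)$ under the roof $\gamma$. A single application of Abramov's theorem then gives $h_{\overline{\mu}}(\psi)=h_{\mu}(f)/E_{\mu}(\gamma)$, which settles both directions at once. This is exactly the formula the paper only records afterwards, as Proposition \ref{prop1}.

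\textbf{The paper's route.} The paper's proof never uses $\gamma$ as a roof. After the substitution $t=\theta(s,x)$, it rewrites $\overline{\mu}$ as the normalization of $\hat{\mu}_{\alpha}=\alpha^{-1}\hat{\mu}$, where $\hat{\mu}$ is the ordinary suspension measure of $\mu$ under the continuous roof $r$. It observes that $E_{\hat{\mu}}(\alpha^{-1})=E_{\mu}(\gamma)/E_{\mu}(r)$ is finite precisely when $E_{\mu}(\gamma)$ is. It then combines the classical Abramov formula for $\phi$ with the time-change entropy results it quotes from \cite{WTY2009} (Proposition 2.15 and Theorem 2.9 there).

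\textbf{What each buys.} The paper needs Abramov only for a continuous roof bounded away from $0$ and $\infty$, pushing all the singular behaviour into the density $1/\alpha$, but it relies on time-change theory. Your route dispenses with time changes and yields the exact entropy. The cost is invoking Abramov for a roof that is integrable but unbounded above, and building the bimeasurable isomorphism. You set that isomorphism up correctly via $\mu(A_{sing})=0$, the $f$-invariance of $A_{sing}$, and $\psi^{\gamma(x)}(0,x)=(0,f(x))$. Note also that $\gamma\geq \min r/\max\alpha>0$, so your roof is bounded below and the standard form of Abramov's theorem applies without further reduction.

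\textbf{Minor remarks.} An atomic ergodic measure of a flow is necessarily a Dirac mass at a singularity, since periodic orbits of a flow carry no atoms. In the direction ``$\Rightarrow$'' you do not need ergodicity of $\overline{\mu}$: any $\psi$-invariant measure with positive entropy already forces $h_{top}(\psi)>0$.
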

\noindent
\begin{proof}
\noindent
    We follow the steps in \cite{RomanaRego2025}, Theorem $3.2.$

    Suppose first that $E_{\mu}(\gamma) = \infty$ for every ergodic measure $\mu$ that satisfies $h_{\mu}(f) > 0.$ By the Variational Principle, to prove that $h_{top}(\psi) = 0$ it suffices to show that $h_{\overline{\mu}}(\psi) = 0$ for every ergodic and non-atomic measure of $\psi.$ Let $\overline{\mu}$ be such an ergodic and non-atomic measure of $\psi.$ By Lemma \ref{lemmameas}, there exists a unique ergodic and non-atomic measure $\mu$ of $f$ such that $E_{\mu}(\gamma) < \infty$ and
    
    \begin{equation}
        E_{\overline{\mu}}(\varphi) = \dfrac{1}{E_{\mu}(\gamma)}E_{\mu}\left(\int_{0}^{\gamma(x)}\varphi(\psi^{t}(0,x))dt\right).
    \end{equation}

    As observed earlier, $A_{sing}$ is countable, which implies that $\mu(A_{sing}) = 0$ since the measure is non-atomic. For $x \in M \backslash A_{sing},$ $t \rightarrow \theta(t,x)$ is increasing and differentiable, hence invertible. By putting $t = \theta(s,x),$ $dt = \dfrac{d}{ds}\theta(s,x)ds = \alpha(s,x)^{-1}ds,$ and then
    \begin{equation}
        \begin{split}
            E_{\overline{\mu}}(\varphi) &= \dfrac{1}{E_{\mu}(\gamma)}E_{\mu}\left(\int_{0}^{\gamma(x)}\varphi(\psi^{t}(0,x))dt\right)\\
            & =  \dfrac{1}{E_{\mu}(\gamma)}E_{\mu}\left(\int_{0}^{r(x)}\varphi(t,x)\alpha(t,x)^{-1}dt\right).
        \end{split}
    \end{equation}

    Define a new measure by $\hat{\mu}(B) := \dfrac{1}{E_{\mu}(r)}E_{\mu}(\int_{0}^{r(x)}\mathds{1}_{B}(t,x)dt).$ Observe that $E_{\hat{\mu}}(\alpha^{-1}) < \infty:$ indeed, $E_{\hat{\mu}}(\alpha^{-1}) = \dfrac{1}{E_{\mu}(r)}\int_{0}^{r(x)}\alpha(t,x)^{-1}dt = \dfrac{E_{\mu}(\gamma)}{E_{\mu}(r)} < \infty.$ Therefore, the measure $\hat{\mu}_{\alpha}$ defined by $\hat{\mu}_{\alpha}(B) = \int_{B}(1/\alpha) d\hat{\mu}$ is finite.
    
    \
    
    Moreover, $E_{\overline{\mu}}(\varphi) = \dfrac{E_{\mu}(r)}{E_{\mu}(\gamma)}E_{\hat{\mu}_{\alpha}}(\varphi),$ for all $\varphi$ continuous. Thus, a direct computation shows that $h_{\overline{\mu}}(\psi) = \dfrac{E_{\mu}(r)}{E_{\mu}(\gamma)} h_{\hat{\mu}_{\alpha}}(\psi).$ By proposition $2.15$ from \cite{WTY2009}, we have that $h_{\hat{\mu}}(\phi) = h_{\mu}(f) = 0,$  and by theorem $2.9$ from \cite{WTY2009} it follows that $h_{\hat{\mu}}(\psi) = 0.$ Then $h_{\overline{\mu}}(\psi) = 0.$ 

    \
    
     On the other hand, suppose that there exists an ergodic measure $\mu$ for $f$ such that $h_{\mu}(f) > 0$ and $\int \gamma d\mu < \infty.$ We define $\overline{\mu}$ by the equation

    \begin{equation}
        E_{\overline{\mu}}(\varphi) = \dfrac{1}{E_{\mu}(\gamma)}E_{\mu}\left(\int_{0}^{\gamma(x)}\varphi(\psi^{t}(0,x))dt\right),
    \end{equation}

     $\varphi:\overline{M}\to \R$ continuous. This clearly defines a measure, since it is a continuous functional defined in the space of continuous functions from $\overline{M}$ to $\R.$ Observe that it is a probability measure as well. At last, to show that $\overline{\mu}$ is $\psi-$invariant, we follow the steps of \cite{RomanaRego2025}. We use the second part of the argument above to conclude that $h_{\overline{\mu}}(\psi) > 0.$
    
\end{proof}






A good feature of pseudo-singular suspension flows is that the Abramov formula holds.

\begin{proposition}[Abramov Formula]\label{prop1}
    Let $\mu$ be a non-atomic and ergodic measure of $f$ and $\overline{\mu}$ the $\psi-$invariant obtained by lemma \ref{lemmameas}. Then $h_{\overline{\mu}}(\psi) = \dfrac{h_{\mu}(f)}{E_{\mu}(\gamma)}.$
\end{proposition}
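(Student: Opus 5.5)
The plan is to identify $\psi$ with an ordinary suspension flow over $f$, only away from a null set, and then apply the classical Abramov formula for suspensions with integrable roof.

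\textbf{Step 1: the new roof.} Let $\mu$ be non-atomic and ergodic for $f$, with $E_{\mu}(\gamma)<\infty$; otherwise Lemma \ref{lemmameas} gives no finite $\overline{\mu}$. Since $A_{sing}$ is countable and $\mu$ has no atoms, $\mu(A_{sing})=0$. Put $M'=M\setminus A_{sing}$, which is $f$-invariant. On $M'$ the function $\gamma$ is finite, positive, continuous (by the Proposition above) and $\mu$-integrable.

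\textbf{Step 2: the conjugacy.} Form the suspension space $\overline{M'}_{\gamma}=\{(s,x): x\in M',\ 0\le s<\gamma(x)\}$ carrying the standard suspension flow $\sigma^{t}$ of $f|_{M'}$ with roof $\gamma$. Define $H(s,x)=\psi^{s}(0,x)$. Because $\theta(\cdot,x)$ is an increasing homeomorphism for $x\in M'$ and $\gamma(x)=\theta(r(x),x)$, the map $H$ is a measurable bijection onto $\overline{M}\setminus \bigcup_{t}\phi^{t}(S)$. It satisfies $H\circ\sigma^{t}=\psi^{t}\circ H$, using the additivity of $\tau$ and $\theta$. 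The formula of Lemma \ref{lemmameas} says precisely that $\overline{\mu}=H_{*}\big(\tfrac{1}{E_{\mu}(\gamma)}(\mu\times \mathrm{Leb})|_{\overline{M'}_{\gamma}}\big)$. Indeed, both sides agree on continuous functions, and the complement of the image of $H$ is $\overline{\mu}$-null: it projects into $A_{sing}$, so the uniqueness part (2) of Lemma \ref{lemmameas} gives this. Hence $(\overline{M},\psi,\overline{\mu})$ is measure-theoretically isomorphic to $(\overline{M'}_{\gamma},\sigma,\hat\nu)$, where $\hat\nu$ is the normalized suspension measure.

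\textbf{Step 3: Abramov and the main obstacle.} Metric entropy is an isomorphism invariant, so $h_{\overline{\mu}}(\psi^{1})=h_{\hat\nu}(\sigma^{1})$. The classical Abramov formula for the flow built under an integrable positive measurable roof is purely measure-theoretic. It gives $h_{\hat\nu}(\sigma^{1})=h_{\mu}(f)/E_{\mu}(\gamma)$, and this conclusion is insensitive to the non-compactness of $M'$ and to $\gamma$ being unbounded.

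The step needing the most care is justifying this use of Abramov: $\gamma$ is neither bounded nor bounded away from $\infty$ near $A_{sing}$. I would handle it by the standard argument. First, treat $\sigma$ as a special flow: the time-$1$ map entropy equals $h$ of the first-return map to the base divided by the mean return time. Then invoke Kac's lemma, which needs only $\gamma\in L^{1}(\mu)$. If needed, approximate by truncated roofs $\min(\gamma,N)$ and pass to the limit using monotonicity of $E_{\mu}(\min(\gamma,N))\to E_{\mu}(\gamma)$. Alternatively, the formula can be read off from the computation in the preceding Theorem via the time-change entropy formula of \cite{WTY2009}, Theorem 2.9 and Proposition 2.15. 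These give $h_{\hat\mu}(\phi)=h_{\mu}(f)/E_{\mu}(r)$, which, combined with $h_{\overline{\mu}}(\psi)=\tfrac{E_{\mu}(r)}{E_{\mu}(\gamma)}h_{\hat\mu_\alpha}(\psi)$, again yields $h_{\mu}(f)/E_{\mu}(\gamma)$.
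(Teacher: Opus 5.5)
Your argument is correct, but it takes a different route from the paper. The paper's entire proof is ``this follows from Lemma 10.2 of \cite{Totoki1966}''. That is, it imports an entropy formula from Totoki's theory of time changes, regarding $\psi$ as a reparametrization of the ordinary suspension $\phi$. You instead work off a $\overline{\mu}$-null invariant set and conjugate $\psi$ to the classical special flow over $f$ with roof $\gamma$ via $H(s,x)=\psi^{s}(0,x)$. You then identify $\overline{\mu}$ with the pushforward of the normalized suspension measure, so you need only the classical Abramov formula for a positive integrable roof.

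What your route buys is transparency about where each hypothesis enters:
\begin{itemize}
\item non-atomicity of $\mu$ is what makes the countable exceptional set negligible;
\item $E_{\mu}(\gamma)<\infty$ is what makes the normalized suspension measure exist;
\item $\gamma\ge \min r/\max\alpha>0$ is what makes the special flow defined for all times.
\end{itemize}
Beyond the additivity of $\theta$, no time-change theory is used. The citation route is shorter and fits the paper's framework: it combines $h_{\hat\mu}(\phi)=h_{\mu}(f)/E_{\mu}(r)$, $\int\alpha^{-1}\,d\hat\mu=E_{\mu}(\gamma)/E_{\mu}(r)$ and the time-change formula. However, it leaves to the reader the check that singular orbits do not interfere. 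Your closing ``alternative'' is essentially that route. As written, it leaves implicit the one step that carries the content, namely the time-change identity $h_{\hat\mu_{\alpha}}(\psi)=h_{\hat\mu}(\phi)$.

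Three small repairs are needed.
\begin{itemize}
\item With $M'=M\setminus A_{sing}$, the fibres over $M'$ may still meet $S\setminus Sing(\psi)$. So your claims that $\theta(\cdot,x)$ is an increasing homeomorphism and that the image of $H$ is $\overline{M}\setminus\bigcup_{t}\phi^{t}(S)$ are justified only if you remove $\pi\bigl(\bigcup_{t}\phi^{t}(S)\bigr)$ instead. That set is still countable, $f$-invariant and $\mu$-null, so nothing else changes.
\item The nullity of the complement of the image needs no appeal to part (2) of Lemma \ref{lemmameas}. $H_{*}\hat\nu$ and $\overline{\mu}$ are Borel probability measures on the compact metric space $\overline{M}$ that agree on continuous functions, so they coincide, and $H_{*}\hat\nu$ is carried by the image of $H$.
\item Drop the truncation by $\min(\gamma,N)$. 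It is not needed, and it would require its own justification, since changing the roof changes the flow.
\end{itemize}
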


\begin{proof}
    This follows from lemma $10.2$ of \cite{Totoki1966}. 
\end{proof}

\section{Proof of Main Results }

\noindent

Now we turn our attention to the study of the entropy of pseudo-singular suspension flows arising from homeomorphisms with infinite entropy.

Take $f \in \text{Homeo}(M)$ and let $r:M \to \R_{> 0}$ be a continuous function. Let $\phi = (\phi^{t})_{t}:\overline{M}\to \overline{M}$ be the suspension flow of $f$ with roof function $r.$ Let $\alpha:\overline{M} \to \R_{\geq 0}$ be a continuous function, and let $\psi = (\psi^{t})_{t}:\overline{M}\to \overline{M}$ be the pseudo-singular suspension flow with roof $r$ and brake $\alpha.$ From what we studied in earlier sections, $\psi$ is a time change of $\phi$ with possibly some singularities.
\subsection{Proof of Theorem  \ref{thmmain}}
Before proceeding to the actual proof, we will need some useful lemmas. They will be useful to control the growth of the topological entropy.

\begin{lemma}\label{lemmaentsup}
Let $X$ be a compact metric space and $f:X\to X$ be an endomorphism (or homeomorphism). Let $X = \bigcup\limits_{\lambda \in \Lambda}I_{\lambda}$ be the union of compact $f-$invariant subsets. Then $h_{top}(f) = \sup\limits_{\lambda \in \Lambda}h_{top}(f|_{I_{\lambda}}).$
\end{lemma}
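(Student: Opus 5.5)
The approach is to use the Variational Principle to reduce the statement to ergodic measures, and then to use a generic point of each ergodic measure to locate a single $I_{\lambda}$ that carries the whole measure.

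\emph{The inequality $\geq$.} Each $I_{\lambda}$ is compact and $f$-invariant, so $f|_{I_{\lambda}}$ is a subsystem of $f$. A $(n,\varepsilon)$-separated subset of $I_{\lambda}$ is also $(n,\varepsilon)$-separated in $X$, hence $\text{sep}(n,\varepsilon,f|_{I_{\lambda}})\leq \text{sep}(n,\varepsilon,f)$ and $h_{top}(f|_{I_{\lambda}})\leq h_{top}(f)$ for every $\lambda$. Taking the supremum over $\lambda\in\Lambda$ gives
$$\sup_{\lambda \in \Lambda}h_{top}(f|_{I_{\lambda}}) \leq h_{top}(f).$$

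\emph{The inequality $\leq$.} By the Variational Principle it suffices to show that for every $\mu\in\mathcal{M}_{e}(f)$ there is some $\lambda$ with $h_{\mu}(f)\leq h_{top}(f|_{I_{\lambda}})$. Fix such a $\mu$ and a countable dense family $\{g_{j}\}$ in $C(X)$. By the Birkhoff Ergodic Theorem applied to each $g_{j}$, there is a set of full $\mu$-measure of points $x$ satisfying
$$\frac{1}{n}\sum_{k=0}^{n-1}g_{j}(f^{k}(x))\to\int g_{j}\,d\mu \quad\text{for all } j.$$
A density argument extends this to all of $C(X)$. In other words, the empirical measures $\frac1n\sum_{k<n}\delta_{f^{k}(x)}$ converge weak-$*$ to $\mu$.

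Pick one such generic point $x$. Since the $I_{\lambda}$ cover $X$, we have $x\in I_{\lambda}$ for some $\lambda$. Invariance of $I_{\lambda}$ puts the whole forward orbit of $x$ in $I_{\lambda}$, so every empirical measure is supported in $I_{\lambda}$. Because $I_{\lambda}$ is closed, the portmanteau theorem gives
$$\mu(I_{\lambda})\geq\limsup_{n} \frac1n\sum_{k<n}\delta_{f^{k}(x)}(I_{\lambda})=1.$$
Hence $\mu$ restricts to an $f|_{I_{\lambda}}$-invariant probability measure on $I_{\lambda}$. Measure-theoretically $(X,f,\mu)$ and $(I_{\lambda},f|_{I_{\lambda}},\mu)$ coincide up to a null set, so $h_{\mu}(f)=h_{\mu}(f|_{I_{\lambda}})$. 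Applying the Variational Principle on $I_{\lambda}$ then gives
$$h_{\mu}(f)\leq h_{top}(f|_{I_{\lambda}})\leq \sup_{\lambda'\in\Lambda}h_{top}(f|_{I_{\lambda'}}).$$
Taking the supremum over ergodic $\mu$ yields $h_{top}(f)\leq\sup_{\lambda}h_{top}(f|_{I_{\lambda}})$.

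The main subtlety is that $\Lambda$ may be uncountable, so one cannot simply decompose $\mu$ over the $I_{\lambda}$ or argue by countable additivity. The generic-point argument avoids this: a single point already determines one $I_{\lambda}$ that carries all of $\mu$. The only points that need care are the existence of generic points for a continuous map on a compact metric space, which follows from separability of $C(X)$, and the closedness of $I_{\lambda}$ used in the weak-$*$ limit.
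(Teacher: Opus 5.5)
The paper does not prove Lemma~\ref{lemmaentsup}. It states it as a known fact and then transfers it to flows through the time-one map, so there is no written argument to compare yours against.

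Your proof is correct, and it is the standard one. The inequality $\geq$ follows from monotonicity of separated sets. For $\leq$ you use the Variational Principle: a single $\mu$-generic point $x$ picks an index with $x\in I_\lambda$, and closedness plus $f(I_\lambda)\subset I_\lambda$ force $\mu(I_\lambda)=1$. Identifying the two measure-preserving systems mod $0$ then gives $h_\mu(f)=h_\mu(f|_{I_\lambda})\leq h_{top}(f|_{I_\lambda})$.

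The gain from this route is the point you flag: it works for an arbitrary, possibly uncountable, index set $\Lambda$. The elementary counting argument splits a maximal $(n,\varepsilon)$-separated set of $X$ into separated subsets of the pieces, but it only handles finitely many pieces. With infinitely many, you would have to interchange $\sup_\lambda$ with the limit $\varepsilon\to 0$, and nothing justifies that.

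Two small remarks:
\begin{itemize}
\item The only hypotheses you use are closedness of $I_\lambda$ and forward invariance $f(I_\lambda)\subset I_\lambda$, which is exactly what is needed for $f|_{I_\lambda}$ to be a self-map.
\item You could replace the portmanteau step with the observation that, for ergodic $\mu$, the forward-orbit closure of $\mu$-a.e.\ point contains $\supp\mu$. This gives $\supp\mu\subset I_\lambda$ directly.
\end{itemize}

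Applying your argument to $\psi^{1}$ also yields the flow version the paper relies on, since compact $\psi$-invariant sets are $\psi^{1}$-invariant.
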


Since the entropy of the flow is defined to be the entropy of the time$-1$ map, the same lemma holds true for flows.

\

The next lemma is a result originally discovered by Bowen (See \cite{B1970}).

\begin{lemma}\label{lemmawandering}

Let $f:X\to X$ be a continuous map on a compact metric space $X.$ Let $\Omega(f) \subset X$ be the non-wandenring set of $f.$ Then $h_{top}(f) = h_{top}(f|_{\Omega(f)}).$

\end{lemma}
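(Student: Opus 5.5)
The statement is Bowen's theorem: for a continuous map $f$ of a compact metric space $X$, we have $h_{top}(f) = h_{top}(f|_{\Omega(f)})$. The inequality $h_{top}(f|_{\Omega(f)}) \leq h_{top}(f)$ is immediate. $\Omega(f)$ is closed and $f$-invariant, hence compact, and any $(n,\varepsilon)$-separated subset of $\Omega(f)$ is also $(n,\varepsilon)$-separated in $X$. The work is in the reverse inequality. I would prove it through the Variational Principle, stated earlier in the paper, rather than by counting orbits combinatorially.

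The plan has two steps. First, I show that every $f$-invariant Borel probability measure $\mu$ satisfies $\mu(X\setminus \Omega(f)) = 0$, so $\operatorname{supp}(\mu) \subset \Omega(f)$.

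\begin{itemize}
\item Take a wandering point $x$. There is an open neighbourhood $U$ of $x$ with $f^{-n}(U)\cap U = \varnothing$ for all $n \geq 1$.
\item Then the sets $U, f^{-1}(U), f^{-2}(U), \dots$ are pairwise disjoint: if $f^{-i}(U)\cap f^{-j}(U) \neq \varnothing$ with $i<j$, applying $f^{i}$ gives a point of $U \cap f^{-(j-i)}(U)$, a contradiction.
\item By invariance all these sets have measure $\mu(U)$. Since $\mu$ is a probability measure, $\mu(U) = 0$.
\item $X\setminus\Omega(f)$ is open, and $X$ is compact metric, hence second countable. So $X\setminus\Omega(f)$ is covered by countably many such wandering neighbourhoods $U$, each of measure zero, and therefore $\mu(X\setminus\Omega(f))=0$.
\end{itemize}

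Second, I transfer the measures. Each $\mu \in \mathcal{M}(f)$ is concentrated on the compact invariant set $\Omega(f)$, so its restriction is an $f|_{\Omega(f)}$-invariant probability measure. Its metric entropy is unchanged, since $X \setminus \Omega(f)$ is a null set and partitions of $X$ restrict to partitions of $\Omega(f)$ with the same measures. Applying the Variational Principle on both sides gives
$$h_{top}(f) = \sup_{\mu\in\mathcal{M}(f)} h_\mu(f) = \sup_{\nu \in \mathcal{M}(f|_{\Omega(f)})} h_\nu(f|_{\Omega(f)}) = h_{top}(f|_{\Omega(f)}).$$

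The main obstacle is the first step, specifically making sure the disjointness argument for the preimages $f^{-n}(U)$ is valid when $f$ is only a continuous endomorphism and not a homeomorphism. It is valid, because invariance of $\mu$ is defined through preimages, $\mu(f^{-1}(A))=\mu(A)$, and the argument above uses only preimages. If one wanted to avoid the Variational Principle, the alternative would be Bowen's original counting argument, which bounds the number of $(n,\varepsilon)$-separated orbit segments by segments spending only boundedly many steps outside a neighbourhood of $\Omega(f)$. That route is messier, so I would use the measure-theoretic proof.
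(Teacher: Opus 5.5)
Your proof is correct, but it takes a different route from the one the paper points to. The paper gives no argument for this lemma and simply cites Bowen \cite{B1970}, whose argument is a direct, purely topological count.

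In its standard form, that count goes as follows. For a given open cover, pass to a power $f^{m}$ so that a neighbourhood $W$ of $\Omega(f)$ is covered by at most $e^{m(h_{top}(f|_{\Omega(f)})+\varepsilon)}$ elements of the $m$-fold join. Then cover the compact set $X\setminus W$ by finitely many wandering open sets. Every orbit enters each of these at most once, so they contribute only a polynomial factor to the number of $n$-itineraries. The exponential growth therefore comes entirely from near $\Omega(f)$, which gives the bound cover by cover.

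You replace this count by the structural fact that every invariant probability measure vanishes on $X\setminus\Omega(f)$, and then let the Variational Principle do the work. Your disjoint-preimages argument and the Lindel\"of reduction are sound. Your remark that only preimages are used, so non-invertibility is harmless, is also sound. Forward invariance $f(\Omega(f))\subset\Omega(f)$ gives $(f|_{\Omega(f)})^{-k}(P\cap\Omega(f))=f^{-k}(P)\cap\Omega(f)$, so restricted partitions have the same entropy. Only the inequality $\le$ in the middle of your display is actually needed; the reverse also holds, since an $f|_{\Omega(f)}$-invariant measure extended by zero is $f$-invariant.

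Your route is much shorter in this paper, where the Variational Principle is already stated and used freely, and it works verbatim when the entropy is infinite. The price is that it rests on that deep theorem. Bowen's argument is elementary, measure-free, and independent of the Variational Principle.
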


We are ready to prove Theorem \ref{thmmain}

\begin{proof}[\emph{\textbf{Proof of Theorem \ref{thmmain}}}]

 Consider the set $\mathfrak{R}_{0}$ of $C^{0}-$perturbations of $C^{r}$ diffeomorphisms in $M$ as described in the commentary after Theorem \ref{thmyano}. For instance, we deal with the case of homeormorphisms that possess pseudo-horseshoes accumulating at one point and are $C^{r}$ outside the pseudo-horseshoes. Since $C^{r}$ diffeomorphisms are dense in $Homeo(M),$ the arguments used by Yano in \cite{Yano1980} show that $\mathfrak{R}_{0}$ is dense in $Homeo(M).$ We will follow a step-by-step proof in order to make the strategy of the proof clear.

\
\noindent
\underline{\textit{The homeomorphism.}}  Let $M$ be a closed compact manifold with dimension $n\geq 2.$ Let $\mathfrak{R}_{0}$ be the set of homeomorphisms of $M$ as in the remark after Theorem \ref{thmyano}. We will describe these homeomorphisms below in a more appropriate setting to our purposes.

Choose disjoint open sets $\{U_{i}\}_{i \in \N}$ in $M$ with $\text{diam}(U_{i})\overset{i \to \infty}{\longrightarrow} 0$ such that $(\overline{U}_{i})$ converge to a point $A$ with respect to the Hausdorff distance. Inside each $U_{i}$ choose another open set $V_{i} \subset U_{i}$ in a way that $\overline{V_{i}} \subsetneq U_{i}$ and which is homeomorphic to a disk. We construct $f$ with the following properties:

\begin{itemize}
    \item In each open set $U_{i}$, we have a horseshoe $\Lambda_{i} \subset V_{i}$ and $V_{i}$ is an isolating neighborhood for $\Lambda_{i}.$

    \item $f$ is $C^{r}$ in $V = M \backslash \displaystyle\bigcup_{i \in \N}U_{i}.$

    \item $A$ is a periodic point of $f.$  
\end{itemize}

\begin{figure}[ht]\label{fig1}
    \centering
    \includegraphics[scale = .47]{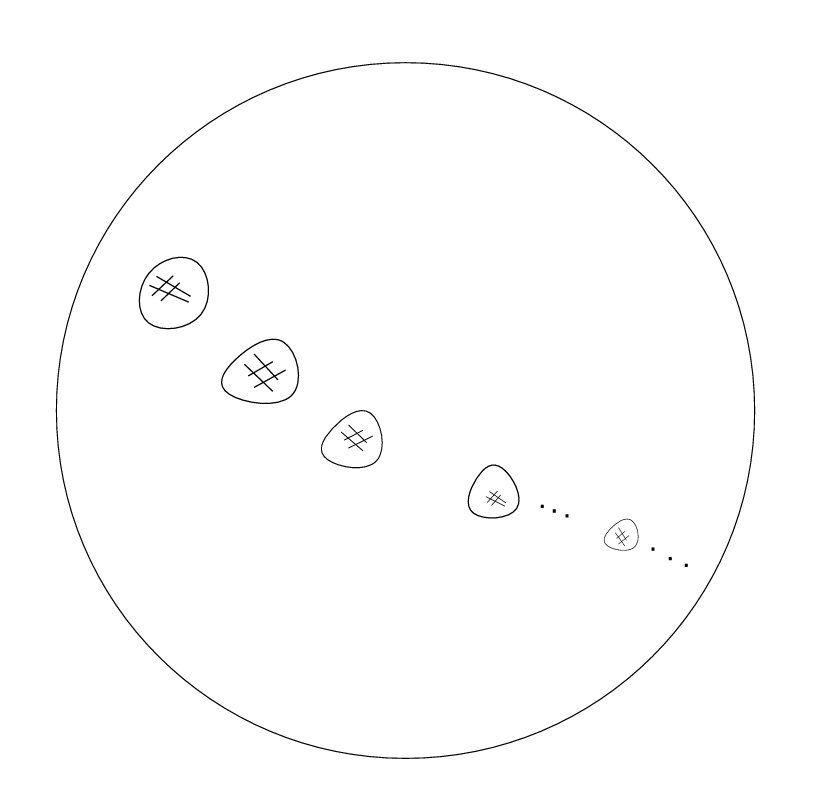}
    \caption{Homeomorphism with infinitely many horseshoes}
\end{figure}

Put $\Lambda_{V} = \displaystyle\bigcap_{i \in \Z}f^{i}(\overline{V}).$ Therefore, we conclude that $\Omega(f)$ is equal to $\Lambda_{V} \cup \left(\bigcup\limits_{i \in \N} \Lambda_{i}\right).$ The horseshoes are constructed in such a way that $(h_{top}(f|_{\Lambda_{i}}))$ is an increasing and unbounded sequence. Observe that $f$ is $C^{r}$ around $\Lambda_{V},$ and this implies that $h_{top}(f|_{\Lambda_{V}}) < +\infty.$

\

\underline{\textit{The pseudo-brake $\alpha.$}} Let $r:M \to \R_{>0}$ be a continuous function and define $(\phi^{t})_{t}:\tilde{M} \to \tilde{M}$ to be the suspension flow of $f$ with roof $r.$ Let $C = \displaystyle\inf_{x \in M} r(x) > 0.$ Let $\pi:\tilde{M}\to M$ be the projection on the second coordinate and put $\tilde{U}_{i} := \pi^{-1}(U_{i}),$ $\tilde{V}_{i} := \{[(t,x)] \in \tilde{M}: x \in V_{i} \text{ and } 0 \leq t \leq \frac{C}{i}\}.$ Now construct bump functions $\beta_i:\tilde{M}\to \R_{\geq 0}$ such that $\supp(\beta_{i}) \subset \tilde{U}_{i}$ and $\beta_{i}|_{\overline{\tilde{V}_{i}}} \equiv 1,$ for all $i \in \N.$ Choose numbers $0 \leq \gamma_{i} \leq 1,$ $i\in \N,$ and put $\alpha = 1 - \sum\limits_{i=1}^{\infty}(1-\gamma_{i})\beta_{i}.$ In this way, $\alpha \equiv 1$ on $\tilde{M} \backslash\bigcup\limits_{i \in \N}\tilde{U}_{i}$ and $\alpha|_{\tilde{V}_{i}} \equiv \gamma_{i}$ for every $i \in \N.$ We choose the $\gamma_{i}'s$ to be decreasing and $\displaystyle\lim_{i\to \infty}\gamma_{i}=0.$ In this way, $\alpha$ is continuous and $\alpha(0,A) = 0.$

Let $\psi$ be the pseudo-singular suspension flow of $f$ with pseudo-brake $\alpha.$ Put $\tilde{\Lambda}_{i}:= \pi^{-1}(\Lambda_{i})$ for all $i \in \N$ and $\tilde{\Lambda}_{V} := \pi^{-1}(\Lambda_{V}).$ By construction, we conclude that $\Omega(\psi) \subset \tilde{\Lambda}_{V} \cup \left(\bigcup\limits_{i \in \N} \tilde{\Lambda}_{i}\right).$

\underline{\textit{Control of the topological entropy of $\psi$ by $\gamma.$}} By equation (\ref{eq1}), for every $x \in V_{i}:$

\begin{equation*}
    \begin{split}
        \gamma(x) = \int_{0}^{r(x)}\alpha(t,x)^{-1}dt \geq  \int_{0}^{\frac{C}{i}}\alpha(t,x)^{-1}dt = \frac{C}{i}\gamma_{i}^{-1}.
    \end{split}
\end{equation*}

In particular, for every $f-$invariant measure $\mu_{i}$ supported on $\Lambda_{i},$ we conclude that $E_{\mu_{i}}(\gamma) \geq \dfrac{C}{i} \gamma_{i}^{-1}.$ Now take an $\psi-$invariant measure $\overline{\mu}_{i}$ which is ergodic a non-atomic and supported on $\tilde{\Lambda}_{i}$. By lemma \ref{lemmameas}, there exists an $f-$invariant, ergodic and non-atomic measure $\mu_{i}$ such that

$$E_{\overline{\mu}_{i}}(g) = \dfrac{1}{E_{\mu_{i}}(\gamma)}E_{\mu_{i}}\left(\int_{0}^{\gamma(x)}g(\psi^{t}(0,x))dt\right)$$

for every continuous function $g:\overline{M}\to\R.$ By Abramov formula, it follows from the construction of $\alpha$ that

\begin{equation*}
    \begin{split}
        h_{\overline{\mu}_{i}}(\psi) = \dfrac{h_{\mu_{i}}(f)}{E_{\mu_{i}}(\gamma)} \leq \dfrac{i}{C\gamma_{i}^{-1}} \cdot h_{top}(f|_{\Lambda_{i}}).
    \end{split}
\end{equation*}

Therefore we conclude that $h_{top}(\psi|_{\tilde{\Lambda}_{i}}) \leq \dfrac{h_{top}(f|_{\Lambda_{i}})\cdot i}{C\gamma_{i}^{-1}},$ for all $i \in \N.$ In the last two inequalities we used the Variational Principle.

By Lemma \ref{lemmawandering} we know that the entropy is concentrated in $\Omega(\psi).$ Therefore, by lemma \ref{lemmaentsup}, we have that

\begin{equation}
  h_{top}(\psi) =\max\left(\sup\limits_{i \in \N} h_{top}(\psi|_{\tilde{\Lambda}_{i}}), h_{top}(f|_{\tilde{\Lambda}_{V}})\right).
\end{equation}

Choose $(\gamma_{i})_{i}$ recursively by $\gamma_{1} = 1$ and $\gamma_{i+1} = \min\left\{\dfrac{\gamma_{i}}{2},\dfrac{C}{h_{top}(f|_{\Lambda_{i}})\cdot i}\right\}$ for all $i \in \N.$ Then $(\gamma_{i})_{i}$ is indeed a decreasing sequence and $\displaystyle\lim_{i \to \infty}\gamma_{i} = 0.$ Observe that $h_{top}(\psi|_{\tilde{\Lambda}_{i}}) \leq \dfrac{h_{top}(f|_{\Lambda_{i}})\cdot i}{C\gamma_{i}^{-1}}\leq 1$ for all $i \in \N.$ Then, let $K = \sup\{h_{top}(\psi|_{\tilde{\Lambda}_{V}}),1\}.$ This implies that $h_{top}(\psi) \leq K < +\infty.$

\end{proof}

\begin{remark}

If we change $\alpha$ by $\alpha_{\varepsilon} := \varepsilon \cdot \alpha$, we in fact show that $h_{top}(\psi) \leq \varepsilon \cdot K.$ Then, the topological entropy of $\psi,$ even though always positive by theorem \ref{thmcomp}, can be arbitrarily small.

\end{remark}

\begin{remark}
    We observe that the argument above to construct $f$ may fail to be residual since $f$ is a $C^{0}$ perturbation of a $C^{r}$ map. The key observation is that the set of $C^{r}$ maps is dense in the set of homeomorphisms while not being open or residual. We need the hypothesis of $f$ being $C^{r}$ outside the pseudo-horseshoes in order to guarantee that the entropy is finite. Without this assumption, we don't know how to control the entropy outside the pseudo-horseshoes.
\end{remark}

\






\subsection{Proof of Theorem \ref{thmcomp}}

\noindent

We note that the pseudo-horseshoe construction employed in the proof of the previous theorem provides a robust symbolic dynamics. This enables us to follow the strategy of \cite{RomanaRego2025} and adapt their arguments to the continuous setting to prove the desired result.


\noindent

\begin{proof}[\emph{\textbf{Proof of Theorem \ref{thmcomp}}}]
Let $\mathfrak{R}$ be the set of homeomorphisms that appear in Theorem \ref{thmyano}. To show the persistence of positivity of the entropy, we use the existence of pseudo-horseshoes: Let $f \in \mathfrak{R}$ and let $\Lambda_{k}$ be a pseudo-horseshoe for $f.$ This implies that there exist $g:\Lambda_{k} \to \Sigma$ a homeomorphism, where $\Sigma$ is a full shift of $n_{k}$ symbols and $g$ conjugates $f^{n}$ and the shift $\sigma:\Sigma \to \Sigma,$ for some $n.$ $\sigma$ is expansive and has the shadowing property. In particular, we have that for every $c \in [0,h_{top}(\sigma)]$ we obtain a minimal set $\Lambda_{c} \subset \Sigma$ for $\sigma$ with the property that $h_{top}(\sigma|_{\Lambda_{c}}) = c.$ By minimality, the family $(\Lambda_{c})_{c \in [0,h_{top}(\sigma)]}$ is pairwise disjoint.

Now put $K_{c} = \displaystyle\bigcup_{i = 0}^{n-1}f^{-i}(\Lambda_{i}),$ for every $c \in [0,h_{top}(f)].$ These sets are compact invariants sets of $f$ and they are pairwise disjoint. It is straight-forward to verify that $h_{top}(f|_{K_{c}}) = \dfrac{c}{n}.$ Moreover, since $A_{sing}$ is countable and $[0,h_{top}(\sigma)]$ is uncountable, there exists $c \in (0,h_{top}(\sigma)]$ such that $K_{c} \cap A_{sing} = \varnothing.$ Therefore, $\gamma|_{K_{c}}$ is continuous. This implies that for every $f-$invariant ergodic measure supported on $K_{c}$ such that $h_{\mu}(f) > 0$ (the existence of this measure is guaranteed by the Variational Principle and by the fact that $h_{top}(f|_{K_{c}}) > 0$), $E_{\mu}(\gamma)$ is finite and by Abramov formula,

$$h_{\overline{\mu}}(\psi) = \dfrac{h_{\mu}(f)}{E_{\mu}(\gamma)} > 0.$$

By the Variational Principle, $h_{top}(\psi) > 0.$ And we are done.

\end{proof}

\subsection{Minimal homeomorphisms with Infinite Topological Entropy}

\noindent

As seen in \cite{WTY2009}, when dealing with minimal homeomorphisms, it is very easy to destroy the entropy, we just need to take a sufficiently slow brake near the singularity. To see this, let us recall a result used in \cite{WTY2009}.

\begin{lemma}\label{lemmauniftime}

Let $f:X\to X$ be a minimal homeomorphism on a compact metric space $(X,d).$ For every $\varepsilon > 0$, there exist $L(\varepsilon) \in \N$ such that for every $x,y \in X$, there exists some $0 < l \leq L(\varepsilon)$ such that $f^{l}(y) \in B(x,\varepsilon).$

\end{lemma}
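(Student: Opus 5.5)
The plan is a compactness argument based on the open cover of $X$ by the preimages $f^{-l}(B(x,\varepsilon/2))$, using minimality to guarantee that these preimages cover.

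Fix $\varepsilon>0$.

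First, for a fixed $x\in X$, I will show that
$$X=\bigcup_{l\geq 1} f^{-l}\bigl(B(x,\varepsilon/2)\bigr).$$
Take any $y\in X$. Its forward orbit closure $\omega$-limit set $\omega(y)$ is nonempty, compact and $f$-invariant, because $X$ is compact and $f$ is a homeomorphism. By minimality, $\omega(y)=X$. In particular, $x\in\omega(y)$, so $f^{l}(y)\in B(x,\varepsilon/2)$ for infinitely many $l\geq 1$.

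One could instead use the full orbit closure, which is $X$ by the definition of minimality stated in the paper. To pass to positive iterates, I will rely on the fact that $\omega(y)$ is compact, invariant and nonempty. This is the only point where minimality enters, and it is the one subtle step: the paper's definition concerns invariant compact sets, so I need to apply it to $\omega(y)$ rather than to the orbit closure.

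Next, the sets $f^{-l}(B(x,\varepsilon/2))$ are open, so by compactness finitely many of them cover $X$, say those with $1\leq l\leq L_x$. This gives a bound depending on $x$.

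To make the bound uniform in $x$, I will use a second compactness step. Cover $X$ by the balls $B(x,\varepsilon/2)$ and extract a finite subcover indexed by $x_1,\dots,x_k$. Set
$$L(\varepsilon)=\max_j L_{x_j}.$$
Given arbitrary $x,y\in X$, choose $j$ with $d(x,x_j)<\varepsilon/2$. There is some $0<l\leq L_{x_j}\leq L(\varepsilon)$ with $d(f^{l}(y),x_j)<\varepsilon/2$. By the triangle inequality, $d(f^{l}(y),x)<\varepsilon$, which gives the statement.

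Apart from the $\omega$-limit step, the argument is routine.
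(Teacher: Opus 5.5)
Your proof is correct and complete. The paper does not prove this lemma; it only recalls it as a known fact from \cite{WTY2009}, so there is no argument in the text to compare yours against.

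Your route is the standard compactness argument:
\begin{itemize}
\item Minimality applied to the nonempty compact set $\omega(y)$ shows that every forward orbit is dense. Since $f$ is a homeomorphism, $f(\omega(y))=\omega(y)$, so $\omega(y)$ counts as invariant under either reading of the paper's definition of minimality.
\item Compactness of the open cover $\{f^{-l}(B(x,\varepsilon/2))\}_{l\geq 1}$ then bounds the return time for each fixed $x$.
\item A finite $\varepsilon/2$-net together with the triangle inequality makes the bound uniform in $x$.
\end{itemize}

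Your remark that one must go through $\omega(y)$, rather than the full orbit closure, to obtain strictly positive iterates $0<l\leq L(\varepsilon)$ is exactly the step that needs care.
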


Remember that the function $\gamma$ controls which invariant measures of $f$ can be lifted to invariant measures for the pseudo-singular suspension $\psi.$  Using the theorem above, if $\{p\} = \pi(S)$ then take a non-atomic ergodic measure $\mu$ for $f$ and $x \in X$ such that 

$$\dfrac{1}{n}\sum\limits_{k=0}^{n-1}\gamma(f^{k}(x)) \to \int\gamma d\mu.$$

For $\varepsilon > 0,$ define $M_{\varepsilon} := \inf\limits_{y \in B(p,\varepsilon)} \gamma(y).$  We have

$$\int\gamma d\mu \geq \lim\limits_{n \to \infty} \dfrac{1}{n} \sum\limits_{0 \leq k \leq n-1 : f^{k}(x) \in B(p,\varepsilon)}\gamma(f^{k}(x)) \geq \lim\limits_{n \to \infty} \dfrac{1}{n} \cdot \dfrac{n}{L(\varepsilon)}\cdot M_{\varepsilon} = \dfrac{M_{\varepsilon}}{L(\varepsilon)},$$

and this does not depend on $\mu$ and $x.$ This implies the following proposition.

\begin{proposition}\label{propLe}

If $\liminf\limits_{\varepsilon \to 0}\dfrac{M_{\varepsilon}}{L(\varepsilon)} = \infty$ then all measures of $\psi$ are atomic. In particular, $h_{top}(\psi) = 0.$

\end{proposition}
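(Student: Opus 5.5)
The plan is to combine the uniform lower bound on $\int\gamma\,d\mu$ derived just before the statement with the Corollary to Lemma \ref{lemmameas}, and then to pass from measures to topological entropy through the Variational Principle.

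\textbf{Step 1: a uniform bound.} Fix any ergodic non-atomic $f$-invariant measure $\mu$. By Birkhoff's theorem there is a $\mu$-generic point $x$ with $\frac{1}{n}\sum_{k=0}^{n-1}\gamma(f^k(x))\to\int\gamma\,d\mu$. This should also hold when the integral is infinite: apply Birkhoff to the truncations $\min(\gamma,N)$ and use monotone convergence. Since $\gamma\ge 0$, we may discard the terms with $f^k(x)\notin B(p,\varepsilon)$.

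Lemma \ref{lemmauniftime} says that among any $L(\varepsilon)$ consecutive iterates of $x$, at least one lands in $B(p,\varepsilon)$. Hence, for large $n$, at least roughly $n/L(\varepsilon)-1$ of the indices $0\le k\le n-1$ satisfy $f^k(x)\in B(p,\varepsilon)$. Each such term is at least $M_\varepsilon$. This gives
\[
\int\gamma\,d\mu\;\ge\;\frac{M_\varepsilon}{L(\varepsilon)}\qquad\text{for every }\varepsilon>0.
\]
The only care needed is the counting in blocks of length $L(\varepsilon)$, which costs at most one missing visit and disappears after dividing by $n$. The bound does not depend on $\mu$.

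\textbf{Step 2: conclude $E_\mu(\gamma)=\infty$.} Let $\varepsilon\to0$ along a sequence realizing the $\liminf$. The hypothesis $\liminf_{\varepsilon\to0} M_\varepsilon/L(\varepsilon)=\infty$ then forces $E_\mu(\gamma)=\infty$ for every ergodic non-atomic $\mu$.

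\textbf{Step 3: only atomic measures.} By the Corollary following Lemma \ref{lemmameas}, every ergodic measure of $\psi$ is atomic.

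I expect this to be the step needing the most care. It is not hard, but one must make sure that a $\psi$-ergodic measure that is atomic is supported on a fixed point. An atomic ergodic measure for a flow is carried by a single orbit. A non-singular orbit cannot carry an invariant probability unless it is periodic, and a periodic orbit would give a non-atomic measure, namely uniform measure along the closed curve. So such a measure must be a Dirac mass at a singularity, which gives zero entropy.

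One might worry about periodic orbits of $\psi$ coming from periodic orbits of $f$. A minimal homeomorphism on an infinite space has none, and periodic orbits of $\psi$ project to periodic orbits of $f$ because $\psi$ has the same orbits as $\phi$ off $S$. So every ergodic measure of $\psi$ is $\delta_q$ with $q\in Sing(\psi)$, and $h_{\delta_q}(\psi)=0$.

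\textbf{Step 4: topological entropy.} The Variational Principle, applied to the time-one map $\psi^1$, gives $h_{top}(\psi)=\sup_{\overline\mu\in\mathcal M_e(\psi^1)}h_{\overline\mu}(\psi^1)$. Ergodic measures of $\psi^1$ are not necessarily $\psi$-ergodic. I would therefore instead use the ergodic decomposition of any $\psi$-invariant measure into $\psi$-ergodic ones, each of which is a Dirac mass at a singularity. Every $\psi$-invariant measure is then supported on $Sing(\psi)$, where $\psi^1$ is the identity, so its entropy vanishes. Since every $\psi^1$-invariant measure averages over $[0,1]$ to a $\psi$-invariant one with the same entropy, it follows that $h_{top}(\psi)=0$.
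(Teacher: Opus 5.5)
Your proposal is correct and follows the paper's own route. The paper uses the uniform bound $\int\gamma\,d\mu\ge M_\varepsilon/L(\varepsilon)$ derived just before the statement, deduces $E_\mu(\gamma)=\infty$ for every ergodic non-atomic $\mu$, and then applies the Corollary to Lemma \ref{lemmameas} and the Variational Principle. Your extra care fills in details the paper leaves implicit: the block counting, identifying atomic $\psi$-ergodic measures as Dirac masses at singularities, and handling $\psi^1$-ergodicity versus $\psi$-ergodicity.
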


That is, if we choose an appropriate brake function $\alpha,$ we can readily verify the hypothesis of the proposition above. This is the idea used in \cite{WTY2009}. In particular, the singular suspension flow can be chosen to have zero topological entropy, despite the value of the topological entropy of the minimal homeomorphism. We show the strength of this technique by observing that \textit{there exist minimal homeomorphisms with infinite topological entropy in compact manifolds of any dimension.}


The following theorem, which is our last main result, combines the existence of minimal homeomorphisms with infinite entropy (Theorem \ref{existence of minimal homeo}) with the construction of a pseudo-singular suspension—having a single singularity and zero topological entropy—derived from such a homeomorphism.

\begin{theorem}\label{Existence of minimal homeo 2}
    Given $n \in \N,$ $n \geq 2,$ there exist a compact manifold $M$ and a minimal homeomorphism with $h_{top}(f) = \infty.$ Moreover, there exists a pseudo-singular suspension flow $\psi$ obtained from $f$ with at most one singularity such that $h_{top}(\psi) = 0.$
\end{theorem}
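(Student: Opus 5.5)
The proof has two independent parts: the existence of the minimal homeomorphism, and the construction of the brake.

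\emph{Existence of $f$.} Following the strategy announced in the introduction, I would first use Grillenberger's construction \cite{GR72} to produce, for each $k\in\N$, a minimal subshift $X_k$ with $h_{top}(X_k)\geq k$. I would then assemble these into a single minimal Cantor system $(K,g)$ with $h_{top}(g)=\infty$. One natural choice is a minimal system on a Cantor set built as an inverse limit, or a Toeplitz/skew-product type extension, that has each $X_k$ as a factor. Then $h_{top}(g)\geq h_{top}(X_k)$ for every $k$.

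A cleaner alternative is to run Grillenberger's block construction directly over an alphabet whose size grows along the construction, yielding a minimal system on a Cantor set in $\prod_k\{1,\dots,N_k\}^{\Z}$ of infinite entropy. I expect this step to be the main obstacle, since minimality must survive the passage to infinitely many factors. Minimality is equivalent to uniform recurrence of every block, so I would try to control uniform recurrence at each finite stage through the nested block structure.

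Next, I would apply the Béguin--Crovisier--Le Roux embedding theorem \cite{FSF2007}. It realizes any minimal Cantor homeomorphism (with a suitable tail condition, or after product with an odometer) as the restriction of a homeomorphism $f$ of a compact $n$-manifold, $n\geq2$, such that $f$ is minimal and every invariant measure gives full mass to the Cantor set. Combined with the variational principle, this gives $h_{top}(f)\geq h_{top}(g)=\infty$.

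\emph{Zero-entropy suspension.} Take the suspension $\phi$ with roof $r\equiv1$, fix $p\in M$, and let $S=\{(0,p)\}$. I would construct a brake $\alpha\geq0$ vanishing only at $(0,p)$, and so slowly that $\gamma$ blows up fast near $p$. Concretely, I would take $\alpha(t,x)=\min\{1,\rho(d(x,p)+|t|)\}$ for an increasing continuous $\rho$ with $\rho(0)=0$. Then $\gamma(x)\geq\int_0^{1}\rho(d(x,p)+t)^{-1}dt$, and $M_\varepsilon\geq\int_0^1\rho(\varepsilon+t)^{-1}dt$.

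The function $L(\varepsilon)$ of Lemma \ref{lemmauniftime} is fixed by $f$ alone. I would therefore choose $\rho$ inductively on scales $\varepsilon_j\downarrow0$, making $\rho$ tiny on $[0,2\varepsilon_j]$, so that $M_{\varepsilon}/L(\varepsilon)\to\infty$. To handle all $\varepsilon$ rather than only the chosen sequence, I would use the monotonicity of $L$ and $M$. The integral must also diverge at $x=p$, so that the singularity is genuine and $\pi(\mathrm{Sing}(\psi))=\{p\}$ (Remark 3.2); choosing $\rho(s)\le s$ near $0$ guarantees this. Proposition \ref{propLe} then shows that every $\psi$-invariant measure is atomic, hence supported on the single singular point, and the variational principle gives $h_{top}(\psi)=0$.
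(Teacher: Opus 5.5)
Your brake construction is sound. The genuine gap is in how you pass from the Cantor system to the manifold. As you state the B\'eguin--Crovisier--Le Roux theorem, $f$ is minimal on a compact $n$-manifold, restricts to $g$ on a Cantor set, and every invariant measure gives full mass to that Cantor set. This cannot hold. For a minimal $f$, every nonempty closed invariant set is all of $M$, and every $f$-invariant measure has full support. So no Cantor set $K\subsetneq M$ can be invariant or carry an invariant measure, and the inequality $h_{top}(f)\ge h_{top}(g)$ has no basis as written.

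What the theorem actually provides, and what the paper uses, needs a base system you never introduce. Fix a uniquely ergodic aperiodic minimal $R$ on $M$, for example an irrational rotation of $\mathbb{T}^n$. The theorem then gives a minimal $f$ on $M$ that topologically extends $R$ and is measure-theoretically isomorphic to $R\times h_C$ for all invariant measures at once. The Cantor dynamics appears only as fibre dynamics over $R$, never as a subsystem of $f$. Entropy then transfers through invariant measures. Each $R\times h_C$-invariant $\mu$ corresponds to an $f$-invariant $\nu$ with $h_\nu(f)=h_\mu(R\times h_C)$. By the variational principle, $h_{top}(f)=h_{top}(R\times h_C)\ge h_{top}(h_C)=\infty$, because $h_C$ is a factor of the product. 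Your hedge about a tail condition or an odometer is not part of the theorem; the missing ingredient is $R$.

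On the Cantor system itself, you do not need to build it. The existence of minimal Cantor systems with infinite entropy is the cited Grillenberger result, which is exactly how the paper treats it. Your inverse-limit idea would not work as stated anyway: the $X_k$ are not linked by factor maps, and products of minimal systems need not be minimal.

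Your brake is more explicit than the paper, which simply defers to \cite{WTY2009}, but it needs one repair. The formula $\alpha(t,x)=\min\{1,\rho(d(x,p)+|t|)\}$ is not well defined on $\overline{M}$, because $(1,x)\sim(0,f(x))$ and the two expressions disagree there. Use $\min\{1,\rho(\bar d(z,(0,p)))\}$ with $\rho\ge 1$ away from $0$, or use $d(f(x),p)+1-t$ for $t$ near $1$. Also, $E_\mu(\gamma)=\infty$ only requires $M_{\varepsilon_j}/L(\varepsilon_j)\to\infty$ along some sequence $\varepsilon_j\to 0$, so the step of controlling all $\varepsilon$ is unnecessary.
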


By the work of Grillenberger in \cite{GR72}, as observed in \cite{BKO2019}, there exist topologically weakly mixing minimal Cantor systems with arbitrary entropy in $[0,\infty].$ The next theorem is key to proving the proposition.

\begin{theorem}[\cite{FSF2007}]

Let $R$ be a uniquely ergodic aperiodic homeomorphism of a compact manifold $M$ of dimension $d \geq 2$. Let $h_{C}$ be a homeomorphism on some Cantor space $C$. Then there exists a homeomorphism $f : M \to M$ isomorphic to $R \times h_{C} : M \times C \to M \times C$. Furthermore, the homeomorphism $f$ is a topological extension of $R$: there exists a continuous map $\Phi : M \to M$ such that $\Phi \circ f = R \circ \Phi.$ If $R$ is minimal (resp. transitive), then $f$ can be chosen minimal (resp. transitive).
\end{theorem}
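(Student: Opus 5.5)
The plan is a Denjoy--Rees type ``blow-up'' construction: approximate $R$ by conjugates $g_n^{-1}\circ R\circ g_n$, where each $g_n$ is a homeomorphism of $M$, and arrange that the $g_n$ converge uniformly to a continuous surjection $\Phi:M\to M$ (not a homeomorphism) while the conjugates converge in $\text{Homeo}(M)$ to a homeomorphism $f$. Passing to the limit in $R\circ g_n = g_n\circ (g_n^{-1}Rg_n)$ then gives $\Phi\circ f = R\circ \Phi$ directly. So $f$ is a topological extension of $R$, and the whole work is to design the $g_n$ so that the fibres of $\Phi$ are Cantor sets carrying the dynamics of $h_C$. I would read ``isomorphic'' in the statement in the measure-theoretic sense: $f$, with a suitable invariant measure, is isomorphic to $R\times h_C$ equipped with $\mu_R\times\nu$, where $\mu_R$ is the unique $R$-invariant measure and $\nu$ is $h_C$-invariant. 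This is the setting in which the result is used later in the paper.

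\textbf{Step 1 (combinatorics).} Since $R$ is aperiodic and uniquely ergodic, for every $N$ there are Kakutani--Rokhlin towers for $R$. Each has a base $B_n$, a closed topological ball, with return times at least $N$; the levels $R^{j}(B_n)$ are pairwise disjoint, and the complement of the tower has $\mu_R$-measure below $\varepsilon_n$. Unique ergodicity makes these estimates uniform, which I will need for the convergence in Step 3.

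\textbf{Step 2 (encoding $h_C$).} I write $C=\bigcap_n \bigsqcup_{w} C_w$ as a nested sequence of clopen partitions, and approximate $h_C$ at stage $n$ by the finite combinatorial map it induces between stage-$n$ pieces. Using $\dim M\geq 2$, inside each level of the $n$-th tower I place disjoint tame closed balls, one for each stage-$n$ piece, refining the balls chosen at stage $n-1$. I then define a homeomorphism $g_{n}$ of $M$, supported in the tower, which shrinks these balls (this is how $g_n$ will eventually collapse them, so that $\Phi$ is not injective). Next I modify $g_{n-1}^{-1}Rg_{n-1}$ only at the top level of the tower, so that it sends the ball labelled $w$ to the ball labelled by the image of $w$ under the stage-$n$ approximation of $h_C$. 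Dimension $\geq 2$ is exactly what allows permuting disjoint balls by a homeomorphism isotopic to the identity, supported in a small neighbourhood.

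\textbf{Step 3 (convergence).} I choose the balls at stage $n$ of diameter less than $\delta_n$, with $\delta_n\to 0$ fast, and I require the stage-$n$ modification to have $C^0$-size, for both the map and its inverse, at most $2^{-n}$. Then $(g_n)$ and $(g_n^{-1}Rg_n)$ are Cauchy, uniformly and in $\text{Homeo}(M)$ respectively. The limit $\Phi$ collapses each nested intersection of balls, so that $\Phi^{-1}(y)$ is a Cantor set for $y$ outside a $\mu_R$-null set and a point otherwise. On these Cantor fibres $f$ acts as $h_C$ through the labelling. The non-Cantor fibres lie over points that remain, at infinitely many stages $n$, in the part of $M$ not covered by the tower. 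Since $\sum\varepsilon_n<\infty$, Borel--Cantelli shows this set is null, which yields the isomorphism with $R\times h_C$.

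\textbf{Step 4 (minimality and transitivity).} If $R$ is minimal, I choose the bases $B_n$ so that every orbit of $R$ is $1/n$-dense after a bounded time, using Lemma~\ref{lemmauniftime} for $R$. I also make the labelled balls small enough that every $f$-orbit visits every stage-$n$ ball, transferring density of $R$-orbits to density of $f$-orbits; the transitive case is analogous.

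I expect Step 3 to be the main obstacle. The modifications must be small in $C^0$ for both $f$ and $f^{-1}$ while still realizing an arbitrary combinatorial permutation of the balls. The fibres of $\Phi$ must be genuinely Cantor sets rather than arcs or continua; I would handle this by making the balls at each stage cellular, pairwise disjoint and with vanishing diameters. Finally, the set where the successive towers fail to match up has to be controlled, and this is where the uniform Kakutani--Rokhlin estimates from unique ergodicity enter.
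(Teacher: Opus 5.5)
The paper gives no proof of this statement; it is quoted from B\'eguin--Crovisier--Le Roux. Your overall scheme ($f_n=g_n^{-1}Rg_n\to f$ in $\text{Homeo}(M)$, $g_n\to\Phi$ uniformly, hence $\Phi\circ f=R\circ\Phi$) is the Denjoy--Rees scheme used there. But Step 3 rests on something impossible: a uniform limit of homeomorphisms of a manifold has connected (indeed cell-like) fibres. Suppose $\Phi^{-1}(y)=A\sqcup B$ with $A\subset U$, $B\subset V$, where $U,V$ are disjoint open sets. Take a connected neighbourhood $W$ of $y$ so small that $\Phi^{-1}$ of a slight enlargement of $W$ lies in $U\cup V$. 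For large $n$, the connected set $g_n^{-1}(W)$ lies in $U\cup V$ and meets both $A$ and $B$, a contradiction. So no fibre of $\Phi$ is a Cantor set. The nested labelled balls give at most a Cantor set \emph{inside} each nontrivial fibre, and the fibre, being connected, contains much more. What your construction can yield is an injective equivariant Borel map $\iota$ from a $\mu_R$-full set times $C$ into $M$. Pushing $\mu_R\times\nu$ forward by $\iota$ gives the weak version you chose to prove, which is enough for $h_{top}(f)=\infty$ as used in the paper. The statement, however, asserts an isomorphism between the systems $f$ and $R\times h_C$, not just for one chosen measure on each side. So you must show that every $f$-invariant measure is carried by the image of $\iota$, i.e.\ that the remaining part of each fibre is null for all invariant measures. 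Your Borel--Cantelli argument on the base only tells you which fibres are nontrivial, not where invariant measures sit inside a fibre, so this step is missing.

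Even the injective direction needs Step 2 repaired. First, $h_C$ does not induce a bijection of the stage-$n$ pieces, so it cannot be realised by permuting balls. You must choose the partitions so that $h_C$ maps each stage-$(n+1)$ piece into a stage-$n$ piece, and send each ball \emph{into} a sub-ball of its target. Second, if labels change only at the top level of the $n$-th tower, the fibre coordinate moves once per passage through that tower, at places that differ from stage to stage, instead of by $h_C$ at every iterate of $R$. You have not arranged that the resulting cocycle over $R$ is (cohomologous to) the constant cocycle $h_C$, so it is not clear that the labelled dynamics is $R\times h_C$. Finally, in Step 4 the relevant condition for minimality is that points with one-point fibres be dense in $M$: any nonempty closed $f$-invariant set maps onto $M$ under $\Phi$, hence contains all such points. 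Density of $R$-orbits does not give this by itself, as Denjoy's examples show.
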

\indent
\begin{proof}[\emph{\textbf{Proof of Theorem \ref{Existence of minimal homeo 2}}}] Let $N \in \N,$ $N \geq 2.$
Choose a uniquely ergodic aperiodic homeomorphism $H$ on the $N-$torus $\mathbb{T}^{N}$ (e.g., an irrational rotation) and a minimal system on Cantor set $h_{C}:C \to C$ with infinite topological entropy (see \cite{GR72}) . By the theorem above, there exists a homeomorphism $f:M \to M$ isomorphic to $H \times h_{C}$ in the measurable sense by a isomorphism $\Phi:M \to M \times C$. 

We have that the topological entropy of $H \times h_{C}$ is infinite. This implies that the topological entropy of $f$ is infinite. This is a consequence of the Variational Principle: Given any invariant measure $\mu$ of $H \times h_{C},$ we have that $\nu := (\Phi^{-1})_{\ast}\mu$ is invariant by $f$ and $(M,\nu,f)$ and $(M\times C,\mu,H \times h_{C})$ are isomorphic in the sense of ergodic theory and then $h_{\nu}(f) = h_{\mu}(H \times h_{C}).$ By the Variational Principle this gives us $h_{top}(f) = \sup\limits_{\nu \in \mathcal{M}^{1}_{f}} h_{\nu}(f) = \sup\limits_{\mu \in \mathcal{M}^{1}_{H\times h_{C}}} h_{\mu}(H\times h_{C}) =\infty.$\\
This proves the existence of minimal homeomorphisms with infinite topological entropy on compact manifolds. Regardless this fact, pseudo-singular suspensions of these homeomorphisms can still have zero entropy: to this end, since $f$ is minimal, the uniform bounded gaps property from Lemma \ref{lemmauniftime} holds; this allows us to choose $\alpha$ as in \cite{WTY2009} such that $\liminf_{\varepsilon \to 0} \dfrac{M_{\varepsilon}}{L(\varepsilon)} = \infty$, satisfying Proposition \ref{propLe}, and consequently the associated pseudo-singular suspension flow $\psi$ has zero topological entropy.
\end{proof}





\bibliography{referencias}
        \bibliographystyle{abbrv}

\noindent \textbf{Jonatas Marinho S. Araujo}\\
National Institute of Pure and Applied Mathematics (IMPA), 22460-320, Brazil\\
jonatas.marinho@impa.br

\ \\

        \noindent \textbf{Sergio Roma\~{n}a}\\
School of Mathematics (Zhuhai), Sun Yat-sen University, 519802, China\\
sergio@mail.sysu.edu.cn

\end{document}